\newcommand{\RR}{\mathbb{R}}
\newcommand{\tl}{\underline{\Lambda}}
\newcommand{\tu}{\overline{\Lambda}}
\newcommand{\ol}{\underline{\textrm{O}}}
\newcommand{\ou}{\overline{\textrm{O}}}
\newcommand{\ool}{\underline{\mathcal{O}}}
\newcommand{\oou}{\overline{\mathcal{O}}}
\newcommand{\lpp}{\mathcal{L}_{p}}
\newcommand{\lqq}{\mathcal{L}_{q}}
\newcommand{\li}{\mathcal{L}}
\newcommand{\lu}{\mathcal{L}_{1}}
\newcommand{\ld}{\mathcal{L}_{2}}
\newcommand{\iio}{\int_{\Omega}}
\newcommand{\ui}{u_{\infty}}
\newcommand{\vi}{v_{\infty}}
\newcommand{\zi}{z_{\infty}}
\newcommand{\xo}{x_{0}}
\newcommand{\uip}{\ui-\phi}
\newcommand{\unp}{u_{n}-\phi}
\definecolor{darkblue}{rgb}{0.05, .05, .9}
\definecolor{darkgreen}{rgb}{0.1, .65, .1}
\definecolor{darkred}{rgb}{0.8,0,0}
\newcounter{dummy}
\newcommand\myitem[1][]{\item[(#1)]\refstepcounter{dummy}\def\@currentlabel{#1}} 
\newcommand{\labeltext}[3][]{ 
	\@bsphack
	\csname phantomsection\endcsname
	\def\tst{#1}
	\def\labelmarkup{\emph}
	\def\refmarkup{\normalfont}
	\ifx\tst\empty\def\@currentlabel{\refmarkup{#2}}{\label{#3}}
	\else\def\@currentlabel{\refmarkup{#1}}{\label{#3}}\fi
	\@esphack
	\labelmarkup{#2}
}
\newtheorem{theorem}{Theorem}[section]
\newtheorem{lemma}[theorem]{Lemma}
\newtheorem{proposition}{Proposition}
\theoremstyle{definition}
\newtheorem{definition}[theorem]{Definition}
\newtheorem{remark}{Remark}
\newcommand{\ep}{\varepsilon}
\begin{document}
	
	\title[Iterations of two obstacle problem]{\bf 
	Monotone iterations of two obstacle problems with different operators
	}

	\author[I. Gonz\' alvez, A. Miranda
		 and J. D. Rossi]{Irene Gonz\' alvez, Alfredo Miranda
		 and Julio D. Rossi}
		 
		 \address{ 
		 I. Gonz\'alvez. Departamento de Matem\'{a}ticas, Universidad Aut\'{o}noma de Madrid,
		Campus de Cantoblanco, 28049 Madrid, Spain.
		\newline
		\texttt{~irene.gonzalvez@uam.es}; 
		\bigskip
		\newline
		 \indent A. Miranda and J. D. Rossi. Departamento de Matem\'{a}ticas, FCEyN, Universidad  de Buenos Aires, Pabell\'{o}n I, Ciudad Universitaria (1428), Buenos Aires, Argentina.  \newline 
		 \texttt{~amiranda@dm.uba.ar,~jrossi@dm.uba.ar }		
}
	
	\date{}

	\maketitle

	\begin{abstract}
		In this paper we analyze iterations of the obstacle problem
		for two different operators. We solve iteratively the obstacle problem
		from above or below for two different differential operators 
		with obstacles given by the previous functions in the iterative process. 
		When we start the iterations with a super or a subsolution 
		of one of the operators
		this procedure generates two monotone 
		sequences of functions that we show 
		that converge to a solution to the two membranes
		problem for the two different operators.		
		We perform our analysis in both the variational
		and the viscosity
		settings. 
	\end{abstract}

%
%


\section{Introduction.}

The main goal in this paper is to find solutions to the two membranes problem
as limits of sequences obtained by iterating the obstacle problem. 
Let us first describe the obstacle problem (from above or below) and the then two membranes
problem (as described in \cite{Caffa2}). 

\subsection{The obstacle problem} 
The obstacle problem is one of the main problems in the mathematical study of variational inequalities and free boundary problems. The problem is to find the equilibrium position of an elastic membrane whose boundary is fixed, and which is constrained to lie above a given obstacle. In mathematical terms,
given an operator $L$ (notice that here we can consider fully nonlinear problems of the form $L u = F(D^2u, Du, u, x)$) that describes the elastic configuration of the membrane, a bounded Lipschitz domain $\Omega$ and a boundary datum $f$, the obstacle problem from below (here solutions are assumed to be above the obstacle) reads as
\begin{equation} \label{obs} 
\begin{cases}
u\geq \phi & \text{in } \Omega,\\
L u\geq 0 & \text{in } \Omega,\\
L u= 0 & \text{in } \{u>\phi\},\\
u=f & \text{on } \partial\Omega,
\end{cases}
\end{equation}
or equivalently
\[
\begin{cases}
\min\{Lu,u-\phi\}=0 & \text{in } \Omega,\\
u=f & \text{on } \partial\Omega.
\end{cases}
\]
We will denote by
$$u=\ol(L,\phi, f)$$
the solution to \eqref{obs}.

When the operator $L$ is in divergence form and is associated to an energy functional $E(u)$  the problem becomes a variational problem
and a solution can be obtained minimizing $E$ in the set of functions in the appropriate
Sobolev space that are above the obstacle and take the boundary datum. 
We refer to Section \ref{sect2} for details. 

The obstacle problem can be also stated as follows: we look for the smallest supersolution of $L$ (with boundary datum $f$) that is above the obstacle. This formulation is quite convenient when dealing with fully nonlinear problems using viscosity solutions. We will assume here that the problem \eqref{obs} has a unique continuous viscosity solution (for general theory of viscosity solutions we refer to \cite{CaCa} and \cite{CIL}). This is guaranteed if $L$ has a comparison principle and one can construct barriers close to the boundary so that the boundary datum $f$ is taken continuously. See Section \ref{sect3}. 

We can also consider the obstacle problem from above
(here solutions are assumed to be below the obstacle)
\begin{equation} \label{obs.2} 
\begin{cases}
v\leq \varphi & \text{in } \Omega,\\
L v\leq 0 & \text{in } \Omega,\\
L v= 0 & \text{in } \{u<\varphi\},\\
v=g & \text{on } \partial\Omega,
\end{cases}
\end{equation}
or equivalently
\[
\begin{cases}
\max\{Lv,v-\varphi\}=0 & \text{in } \Omega,\\
v=g & \text{on } \partial\Omega.
\end{cases}
\]
In this case, the obstacle problem can be viewed as follows: 
we look for the largest subsolution of $L$ (with boundary datum $g$) that is below the obstacle. 
We will denote by
$$v=\ou(L,\varphi, g)$$
the solution to \eqref{obs.2}.

For general references on the obstacle problem (including regularity of solutions, that
in some cases are proved to be optimally $C^{1,1}$) we just mention \cite{cafa}, \cite{cafa2}, \cite{toti}, \cite{petro}, the survey \cite{Mon} and references therein.

\subsection{The two membranes problem}

Closely related to the obstacle problem, one of the systems that attracted the attention 
of the PDE community is the two membranes problem. This problem models
the behaviour of two elastic membranes that are clamped at the boundary of a prescribed domain, they are assumed to be ordered, one membrane is 
above the other,
and they are subject to different external forces
(the membrane that is on top is pushed down and the one that is below is pushed up). 
The main assumption here is that the two membranes do not penetrate each other
(they are assumed to be ordered in the whole domain). This situation can be
modeled by two obstacle problems; 
 the lower membrane acts as an obstacle from below for the free elastic equation
 that describes the location of the upper membrane, while, conversely, the upper 
membrane is an obstacle from above for the equation for the lower membrane. 
When the equations that obey the two membranes have a variational structure
this problem can be tackled using calculus of variations (one aims to minimize
the sum of the two energies subject to the constraint that the functions that describe the position of the membranes are always ordered inside the domain, one is bigger or equal than the other), see \cite{VC}. On the other hand, when the involved equations are not variational the analysis
relies on monotonicity arguments (using the maximum principle). 
Once existence of a solution (in an appropriate sense) is obtained a lot of interesting 
questions arise, like uniqueness, regularity of the involved functions, a description of the contact set, the regularity of the contact set, etc. See \cite{Caffa1,Caffa2,S}, the 
dissertation \cite{Vivas}
and references therein.
More concretely, 
given two differential operators $L_1 (D^2u, D u, u, x)$ and $
L_2 
(D^2v, D v, v,x)$ the mathematical formulation the two membranes problem
(with Dirichlet boundary conditions) is 
the following:

\begin{definition} \label{def1.1}
A pair $(u,v)$ is called a solution to the two membranes problem 
if it solves
\begin{equation}
	\label{ED12FG}
	\left\lbrace 
	\begin{array}{ll}
		\displaystyle \min\Big\{ L_1 (u)(x),(u-v)(x)\Big\}=0, \quad & x\in\Omega, \\[6pt]
	\displaystyle \max\Big\{ L_2 (v)(x),(v-u)(x)\Big\}=0, \quad & 
	x\in\Omega, \\[6pt]
	u(x)=f(x), \qquad 	v(x)=g(x), \quad & x\in\partial\Omega.
	\end{array}
	\right.
	\end{equation}
	With our previous notation, this system can be written as
	\begin{equation*}
		u=\ol(L_1,v,f )\qquad \textrm{and}\qquad
		v=\ou(L_2,u,g).
	\end{equation*}
\end{definition}

Remark that, in general, the two membranes problem as stated in Definition
\ref{def1.1} does not have uniqueness. To see this, just take a solution to $L_1(D^2 u)=0$ 
with $u|_{\partial \Omega} = f$ and $v$ the solution to the obstacle problem for $L_2(D^2 v)$ with $u$ as obstacle from above, that is, $v=\ou(L_2,u, g)$. One can easily check that the pair $(u,v)$ is a solution to the general formulation of the two membranes problem stated Definition \ref{def1.1}. Analogously, one can take a solution to $L_2(D^2 v)=0$
with $v|_{\partial \Omega} = g$ and $u$ the solution to the corresponding obstacle problem,
$u=\ol(L_1,v, f)$, to obtain another solution to the two membranes problem. 
In general, these two pairs do not coincide.
 For example, consider $\Omega$ as the interval $(0,1)$,  $f\equiv 1$, $g\equiv 0$ on $\partial\Omega$ and the operators $L_{1}(u'')=u''-10$ and $L_{2}(v'')=v''+2.$ The solution to $L_{1}(u'')=0$ with $u(0)=u(1)=1$ is $\widehat{u}(x)=- 5x(1-x)+1$ and the solution to $L_{2}(v'')=0$ with $v(0)=v(1)=0$
is $\tilde{v}(x)=x(1-x)$ for $x\in(0,1)$.  Let $\widehat{v}=\ou(L_2,\widehat{u}, g)$ and  $\tilde{u}=\ol(L_1,\tilde{v}, f)$. Both pairs $(\widehat{u},\widehat{v})$ and $(\tilde{u},\tilde{v})$ are solutions to the two membranes problem but they are different.  Since $\widehat{u}$ is a solution and $\tilde{u}$ is a supersolution to $L_{1}$ with the same boundary datum, by the comparison principle, we get $\widehat{u}\leq \tilde{u}$. On the other hand, since $\tilde{u}$ is the solution to the obstacle problem for $L_1(u'')$ with $\tilde{v}$ as obstacle from below,  $\tilde{u}\geq \tilde{v}$. Since $\widehat{u}$ and $\tilde{v}$ are not ordered, we obtain that $\widehat{u}\not\equiv\tilde{u}$.

The two membranes problem for the Laplacian with a right hand side, that is, for $L_1(D^2u)=-\Delta u +h_1$ and $L_2(D^2v)=-\Delta v-h_2$, was considered in \cite{VC} 
using variational arguments. Latter, in \cite{Caffa1} the authors solve the two membranes problem for two different fractional Laplacians of different order (two 
linear non-local operators defined by two different kernels). 
Notice that in this case the problem is still variational. In these cases an extra condition appears, namely, the sum of the two operators vanishes,
$
L_1 (D^2 u) + L_2 (D^2 v) =0$,
inside $\Omega$. Moreover, this extra condition together with the variational structure is used to prove a $C^{1,\gamma}$ regularity result for the solution.

The two membranes problem for a fully nonlinear operator was
studied in \cite{Caffa1, Caffa2,S}.
In particular, in \cite{Caffa2} the authors consider a version of
the two membranes problem for two different fully nonlinear operators, $L_1(D^2 u)$ and
$L_2(D^2 v)$. Assuming that $L_1$ is convex and that
$
L_2(X) = -L_1(-X),$
they prove that solutions are $C^{1,1}$ smooth. 

We also mention that a more general version of the two membranes problem
involving more than two membranes was considered by several authors (see for example  \cite{ARS,CChV,ChV}).

\subsection{Description of the main results} As we mentioned at the beginning of the introduction, our main goal is
to obtain solutions to the two membranes problems as limits of
iterations of the obstacle problem. For iterations of the obstacle problem
in a different context we refer to \cite{BlancR}.

Let us consider two different operators $L_1$ and $L_2$ 
(with boundary data $f$ and $g$ respectively, we assume that $f>g$) and generate
two sequences iterating the obstacle problems from above and below.
Given an initial function 
$v_{0}$, take $u_0$ as the solution to the obstacle problem
from below for $L_1$ with boundary datum $f$ and 
obstacle $v_0$, that is, 
$$
u_{0}=\ol(L_1,v_{0},f ).
$$
Now, we use this $u_0$ as the obstacle from above 
for $L_2$ with datum $g$ and obtain
$$
v_{1}=\ou( L_2,u_{0},g).
$$
We can iterate this procedure (solving the obstacle problem 
for $L_1$ or $L_2$ with boundary data $f$ or $g$ and 
obstacle the previous $u$ or $v$) to obtain two sequences
$\{u_n\}_n$, $\{v_n\}_n$, given by
	\begin{equation*}
		u_{n}=\ol(L_1,v_{n},f ),\qquad
		v_{n}=\ou( L_2,u_{n-1},g).
	\end{equation*}
	
	Our main goal here is to show that, when the initial
	function $v_0$ is a 
	subsolution for $L_2$ with boundary datum $g$, then
	both sequences of functions $\{u_{n}\}_{n}$, $\{v_{n}\}_{n}$
	are nondecreasing sequences that converge
	to a limit pair that gives a solution of the two membranes problem, i.e.,  there exists a pair of functions  $(u_{\infty},v_{\infty})$ such that  
	\begin{equation*}
		u_{n}\to u_{\infty}\qquad \textrm{and}\qquad
		v_{n}\to v_{\infty}
	\end{equation*}
	and the limit functions satisfy
	\begin{equation*}
		u_{\infty}=\ol(L_1,\vi,f )\qquad \textrm{and}\qquad
		v_{\infty}=\ou(L_2,\ui,g).
	\end{equation*}
	
	An analogous result can be obtained when we start the iteration with
	$u_0$ a supersolution to $L_1$ with boundary datum $f$ and consider
	$\{u_n\}_n$, $\{v_n\}_n$, given by
	\begin{equation*}
		u_{n}=\ol(L_1,v_{n-1},f ),\qquad
		v_{n}=\ou( L_2,u_{n},g).
	\end{equation*}
	In this case the sequences are non-increasing sequences and 
	also converge to a solution to the two membranes problem.

We will study this iterative scheme using two very different frameworks
for the solutions.
First, we deal with variational methods and understand solutions in the weak sense
(this imposes that $L_1$ and $L_2$ must be in divergence form)
and next we deal with viscosity solutions (allowing $L_1$ and $L_2$
to be general elliptic operators). 

{\bf Variational operators.} To simplify the notation and the arguments,
when we deal with the problem using variational methods, we will
concentrate in the particular choice of $L_1$ and $L_2$ as two
different $p-$Laplacians; that is, we let  
$$	\lpp(w)=  -\Delta_{p}w+h_{p} \qquad
	\mbox{and} \qquad
	\lqq(w)=-\Delta_{q}w+h_{q},
$$
where $\Delta_{p}w = \mbox{div} (|\nabla w|^{p-2} \nabla w)$,
$\Delta_{q}w = \mbox{div} (|\nabla w|^{q-2} \nabla w)$ are two different 
$p-$Laplacians 
and $h_{p}$, $h_{q}$ are two given functions.
These operators are associated to the energies
$$
E_{p}(w)=\frac{1}{p}\int_{\Omega}|\nabla w|^{p}+\int_{\Omega}h_{p}w \qquad
\mbox{and} \qquad
 E_{q}(w)=\frac{1}{q}\int_{\Omega}|\nabla w|^{q}+\int_{\Omega}h_{q}w.
$$
These energies are naturally well-posed in the Sobolev spaces
$W^{1,p} (\Omega)$ and $W^{1,q} (\Omega)$, respectively.

In this variational framework the obstacle problem can be solved
minimizing the energy in the appropriate set of functions. In fact, the solution to
the obstacle problem from below, $u=\ol(\lpp,\varphi,f )$,
can be obtained as the minimizer of the energy $E_{p}$ among functions
 are constrained to be above the obstacle, that is, 
\begin{equation} \label{min.intro}
E_{p}(u)=\min_{w\in \tl^{\,p}_{f,\varphi}}\Big\{E_{p}(w)\Big\},
\end{equation}
with 	
\begin{equation*}
		\tl^{\,p}_{f,\varphi}=\Big\{ w\in W^{1,p}(\Omega)\,:\;\; w-f\in W^{1,p}_{0}(\Omega),\;\;w\geq \varphi\;\;\textrm{a.e}\;\;\Omega\Big \}.
	\end{equation*}
	The minimizer in \eqref{min.intro} exists and is unique provided the set
	$\tl^{\,p}_{f,\varphi}$ is not empty.
	We refer to Section \ref{sect2} for details and precise definitions. 
	
	Due to the fact that the problem satisfied by the limit of the sequences
	constructed iterating the obstacle problem from above and below does
	not have uniqueness (see the comments after Definition \ref{def1.1}) we need
	to rely on monotonicity of the sequences to obtain convergence (rather than use
	compactness arguments that provide only convergence along subsequences). 
	
	We notice at this point that, in this variational setting, one can solve the two membranes problem just minimizing the total energy 
	$
	E(u,v) = E_p (u) + E_q(v)
	$
	in the subset of $W^{1,p}(\Omega)\times W^{1,q}(\Omega)$ given by $\{(u,v) : u-f\in W^{1,p}_{0}(\Omega),
	v-g\in W^{1,q}_{0}(\Omega), u\geq v \textrm{ a.e }\Omega \}$.
	The fact that there is a unique pair that minimizes $E(u,v)$ follows from the strict 
	convexity of the functional using the direct method of calculus of variations. One can check that the minimizing pair is in fact a solution to the two membranes problem
	given in Definition \ref{def1.1}. However, in general, there are other solutions
	to the two membranes problem in the sense of Definition \ref{def1.1} that are not
	minimizers of $E(u,v)$.
	Since solutions according to
		Definition \ref{def1.1} are, in general, not unique, we observe that 
	the limit that we prove to exist for the sequences that we construct iterating the obstacle problem from above and below does not necessarily converge to the unique minimizer to
	$E(u,v)$.

{\bf Fully nonlinear operators.}
On the other hand, when we deal with viscosity solutions to the obstacle \mbox{problems}
we use the general framework described in \cite{CIL} and we understand sub and supersolutions applying the operator to a smooth test function that touches the graph of the sub/supersolution from above or below at some point in the domain. 
Remark that here we can consider fully nonlinear
elliptic operators that need not be in divergence form. 

When we develop the viscosity theory of the iterations 
in this general viscosity setting it is more difficult to obtain estimates
in order to pass to the limit in the sequences $\{u_n\}_n$,
$\{v_n\}_n$ and hence we have to rely again on monotonicity
(that is obtained using comparison arguments). 

In the viscosity framework there is no gain in 
considering two particular operators. Therefore, we will
state and prove the results for two general second order elliptic operators that
satisfy the comparison principle.
However, if one looks for a model problem one can think on two
normalized $p-$Laplacians, that is, let 
$$L_1(w)=  - \beta_1 \Delta w - \alpha_1 \Delta_\infty w +h_{p} \quad
	\mbox{and} \quad
	L_2(w)= - \beta_2 \Delta w - \alpha_2 \Delta_\infty w +h_{q}.
$$
Here $\Delta w = trace (D^2w)$ (the usual Laplacian),
$\Delta_\infty w = 
\langle D^2 w \frac{\nabla w}{|\nabla w|}, \frac{\nabla w}{|\nabla w|} \rangle $
(the normalized infinity Laplacian), $\alpha_i, \beta_i$ are nonnegative
coefficients 
and $h_p$, $h_q$ are continuous functions.
This operator, the normalized $p-$Laplacian, appears naturally when one considers
game theory to solve nonlinear PDEs, we refer to 
\cite{MPRb,PSSW,PS} and the recent books \cite{BRLibro,Lewicka}. 

Let us finish the introduction with a short paragraph
concerning the equivalence between weak and viscosity solutions for divergence form operators. 
For the Dirichlet problem  $\Delta u =0$ the notions of weak and viscosity solutions coincide (and in fact the Dirichlet problem
has a unique classical solution), see \cite{Hi} and \cite{RWZ}.
Moreover, the equivalence between weak and
viscosity solutions include quasi-linear equations, \cite{JLM,Medina}, and some non-local equations,
\cite{BM,dBdPO}.

\medskip

The paper is organized in two sections; in Section \ref{sect2}
we deal with that variational setting of the problem and in Section
\ref{sect3} we analyze the problem using viscosity theory.


\section{Variational solutions}
\label{sect2}

In this section we will obtain two sequences of solutions to variational obstacle problems whose limits constitute a pair of functions that is a variational solution to the two membranes problem. 

Let us start by introducing some notations and definitions. We follow 
\cite{libroFinlan}. For short, we write   
\begin{align*}
	&\lpp(w)=  -\Delta_{p}w+h_{p}, &E_{p}(w)=\frac{1}{p}\int_{\Omega}|\nabla w|^{p}+\int_{\Omega}h_{p}w,\\&\lqq(w)=-\Delta_{q}w+h_{q},
	&E_{q}(w)=\frac{1}{q}\int_{\Omega}|\nabla w|^{q}+\int_{\Omega}h_{q}w,
\end{align*}
for the operators and their associated energies. In this variational context, we 
now introduce the definition of weak super and subsolutions.

\begin{definition}
		Given $p\in (1,\infty)$ and $h_{p}\in W^{-1,p}(\Omega)$ we say that $u\in W^{1,p}(\Omega)$ is a weak supersolution (resp. subsolution) for $\lpp$ if 
			\begin{equation*}
				\int_{\Omega}|\nabla u|^{p-2}\nabla u\cdot\nabla w+\int_{\Omega}h_{p}w\geq 0 \ \ (\mbox{resp.} \le 0 )
			\end{equation*} for all non-negative $w\in W^{1,p}_0(\Omega)$ and in this case we writte $\mathcal{L}_{p}u\geq 0$ (resp. $\mathcal{L}_{p}u\leq 0$) weakly in $\Omega$. We say that $u\in W^{1,p}(\Omega)$ is a weak solution for $\lpp$ if 
		\begin{equation*}
		\int_{\Omega}|\nabla u|^{p-2}\nabla u\cdot \nabla w+\int_{\Omega}h_{p}w= 0
	\end{equation*} for all $w\in W^{1,p}_0(\Omega)$ and we write $\mathcal{L}_{p}u= 0$ weakly in  $\Omega$.
\end{definition}
\begin{definition}
	Given $p,\,q\in (1,\infty)$, let $f\in W^{1,p}(\Omega)$,  $\varphi\in L^{1}(\Omega)$, $h_{p}\in W^{-1,p}$ and 
	\begin{equation*}
		\tl^{\,p}_{f,\varphi}=\Big\{ w\in W^{1,p}(\Omega)\,:\;\; w-f\in W^{1,p}_{0}(\Omega),\;\;w\geq \varphi\;\;\textrm{a.e}\;\;\Omega\Big \}.
	\end{equation*}
 When $\tl^{\,p}_{f,\varphi} \neq \emptyset$, we say $u\in 	\tl^{\,p}_{f,\varphi}$ is the
  solution of the $p\,$-Laplacian upper obstacle problem in a varational sense with obstacle $\varphi$ and boundary datum $f$ if 
\begin{equation*} 
E_{p}(u)=\min_{w\in \tl^{\,p}_{f,\varphi}}\Big\{E_{p}(w)\Big\},
\end{equation*}
i.e, $u$ minimizes the energy associated with the operator $\lpp:=-\Delta_{p}+h_{p}$
in the set of functions that are above the obstacle and take
the boundary datum, $\tl^{\,p}_{f,\varphi}$. In this case, we denote  $u=\ol(\lpp,\varphi, f)$.
In Proposition \ref{prop0} we show that this solution exists and is unique
provided the set $\tl^{\,p}_{f,\varphi}$ is not empty.

Analogously, for $g\in W^{1,q}(\Omega)$,  $\varphi\in L^{1}(\Omega)$, $h_{q}\in W^{-1,q}$, we define
	\begin{equation*}
	\tu^{\,q}_{g,\varphi}=\Big\{ w\in W^{1,q}(\Omega)\,:\;\; w-g\in W^{1,q}_{0}(\Omega),\;\;w\leq \varphi\;\;\textrm{a.e}\;\;\Omega\Big \}.
\end{equation*}
 When $\tu^{\,q}_{g,\varphi} \neq \emptyset$, we say that  
$v\in\tu^{\,q}_{g,\varphi}$ is the solution of the $q\,$-Laplacian lower obstacle problem in a varational sense with obstacle $\varphi$ and  boundary $g$ data if 
\begin{equation*}
	E_{q}(v)=\min_{w\in \tu^{\,q}_{g,\varphi}}\Big\{E_{q}(w)\Big\}.
\end{equation*} 
 For short, we denote $v$ as $v=\ou(\lqq,\varphi,g)$.
\end{definition}
\begin{proposition} \label{prop0}
	Let be $h_{p}\in W^{-1,p}(\Omega)$ and $f\in W^{1,p}(\Omega)$. If $	\tl^{\,p}_{f,\varphi}\neq\emptyset$ there exists a unique $u\in\tl^{\,p}_{f,\varphi}$ such that $E_{p}(u)\leq E_{p}(w)$ for all $w\in\tl^{\,p}_{f,\varphi}$, i.e, there exists a unique $u\in\tl^{\,p}_{f,\varphi}$ that minimizes the energy $E_p$
	in $\tl^{\,p}_{f,\varphi}$. That is, $u=\ol(\lpp,\varphi, f)$.
 This function satisfies
	\begin{equation*}
		\begin{cases}
			u\geq \varphi&\;\;\textrm{a.e. in}\;\;\Omega,\\
			\li_p u\geq 0&\;\;\textrm{weakly in}\;\;\Omega,\\
			\li_p u=0&\;\;\textrm{weakly in}\;\;\{ u>\varphi\} \mbox{ (for $\varphi$ continuous)},\\
			u=f&\;\;\textrm{a.e. in}\;\;\partial\Omega.
		\end{cases}
	\end{equation*}
		
	Analogously, let $h_{q}\in W^{-1,q}(\Omega)$ and $g\in W^{1,q}(\Omega)$, if $\tu^{\,q}_{g,\varphi}\neq\emptyset$, there exists a unique $v\in\tu^{\,q}_{g,\varphi}$ such that $E_{q}(v)\leq E_{q}(w)$ for all $w\in\tu^{\,q}_{g,\varphi}$. That is, $v=\ou(\lqq,\varphi,g)$.  This function satisfies
	\begin{equation*}
		\begin{cases}
			v\leq \varphi&\;\;\textrm{a.e. in}\;\;\Omega,\\
			\li_q v\leq 0&\;\;\textrm{weakly in}\;\;\Omega,\\
			\li_q v=0&\;\;\textrm{weakly in}\;\;\{ v<\varphi\} \mbox{ (for $\varphi$ continuous)},\\
			v=g&\;\;\textrm{a.e. in}\;\;\partial\Omega.
		\end{cases}
	\end{equation*}
\end{proposition}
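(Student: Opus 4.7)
The plan is to proceed by the direct method of the calculus of variations and then extract the pointwise conditions from the resulting variational inequality.

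\emph{Existence and uniqueness.} First I will verify that $E_{p}$ is coercive and weakly lower semicontinuous on the strongly closed convex set $\tl^{\,p}_{f,\varphi}$. Coercivity follows from Poincar\'e's inequality applied to $u-f\in W^{1,p}_{0}(\Omega)$, combined with Young's inequality to absorb the term $\int h_{p}u$ using $h_{p}\in W^{-1,p}(\Omega)$. Since $\xi\mapsto \tfrac{1}{p}|\xi|^{p}$ is strictly convex and the remaining term is linear in $u$, the functional $E_{p}$ is strictly convex and weakly lower semicontinuous. Picking a minimizing sequence in $\tl^{\,p}_{f,\varphi}$ (nonempty by hypothesis), the coercivity gives a $W^{1,p}$ bound; a weakly convergent subsequence converges to a limit that lies in $\tl^{\,p}_{f,\varphi}$ because the set is convex and strongly closed, hence weakly closed. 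Weak lower semicontinuity identifies this limit as a minimizer, and strict convexity delivers uniqueness.

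\emph{Variational inequality.} For any $w\in \tl^{\,p}_{f,\varphi}$, convexity of the admissible set gives $u+t(w-u)\in \tl^{\,p}_{f,\varphi}$ for $t\in[0,1]$. Differentiating $t\mapsto E_{p}(u+t(w-u))$ at $t=0^{+}$ and using minimality produces
\begin{equation*}
\int_{\Omega}|\nabla u|^{p-2}\nabla u\cdot \nabla(w-u)\dx+\int_{\Omega} h_{p}(w-u)\dx\geq 0 \qquad \forall\, w\in \tl^{\,p}_{f,\varphi}.
\end{equation*}
The conditions $u\geq\varphi$ a.e.\ and $u=f$ on $\partial\Omega$ in the trace sense are built into $\tl^{\,p}_{f,\varphi}$. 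Testing with $w=u+\psi$ for $0\leq \psi\in W^{1,p}_{0}(\Omega)$ (which lies in $\tl^{\,p}_{f,\varphi}$ since $w\geq u\geq \varphi$) yields $\lpp u\geq 0$ weakly.

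\emph{The equation inside the non-coincidence set.} This is the delicate step and the one that invokes continuity of $\varphi$. I would first pass to a lower semicontinuous representative of $u$ (available via the standard $p$-superharmonic theory for weak supersolutions of $\lpp$, cf.\ \cite{libroFinlan}), so that $\{u>\varphi\}$ is an open set. For $\psi\in C_{c}^{\infty}(\Omega)$ with $K:=\mathrm{supp}(\psi)\Subset\{u>\varphi\}$, the lower semicontinuity of $u-\varphi$ on the compact $K$ yields $\delta:=\min_{K}(u-\varphi)>0$. Hence for $|t|\leq \delta/\|\psi\|_{\infty}$, the perturbation $u+t\psi$ still satisfies $u+t\psi\geq \varphi$ a.e., so $u\pm t\psi\in \tl^{\,p}_{f,\varphi}$. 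Applying the variational inequality for both signs of $t$ gives the reverse inequality, so $\lpp u=0$ weakly against every such $\psi$, i.e.\ on $\{u>\varphi\}$.

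\emph{Upper obstacle and main obstacle.} The statement for $v=\ou(\lqq,\varphi,g)$ is entirely parallel: one replaces $u-\varphi\geq 0$ with $\varphi-v\geq 0$, which reverses the direction of the variational inequality (now $\leq 0$), and tests with $w=v-\psi$ for $\psi\geq 0$ to obtain $\lqq v\leq 0$; alternatively one reduces to the lower obstacle case by considering $-v$ with obstacle $-\varphi$. The main difficulty throughout is the third bullet: giving rigorous meaning to ``$\{u>\varphi\}$'' for a merely $W^{1,p}$ function $u$ and producing admissible perturbations $u\pm t\psi$ supported strictly inside it. Continuity of $\varphi$ together with the lower semicontinuous representative of $u$ is precisely what makes the set open and supplies the room $\delta>0$ needed for the two-sided perturbation.
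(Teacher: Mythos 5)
Your proposal follows the paper's proof essentially verbatim: the direct method (coercivity, strict convexity, weak closedness of the convex constraint set) for existence and uniqueness, differentiating $E_{p}$ along convex combinations to obtain the variational inequality, one-sided perturbations $u+t\psi$ with $\psi\geq 0$ to get $\lpp u\geq 0$, and two-sided perturbations supported inside $\{u>\varphi\}$ to get $\lpp u=0$ there. The only minor variation is in how you justify that $\{u>\varphi\}$ is open: the paper invokes continuity of the obstacle-problem solution for a continuous obstacle (Section~3.26 of \cite{libroFinlan}), whereas you instead pass to the lower semicontinuous representative of the weak supersolution $u$ --- both are standard facts from \cite{libroFinlan} and either one supplies the $\delta>0$ margin needed for the two-sided test.
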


\begin{proof}
	Let us prove existence and uniqueness of the solution to the obstacle problem from below. 
	The proof is contained in Theorem 3.21 in \cite{libroFinlan} but we reproduce some details here for completeness. Suppose that $\tl^{\,p}_{f,\varphi}\neq\emptyset$. Since $E_p$ is a coercive and strictly convex functional in $W^{1,p}(\Omega)$, we obtain that all minimizing sequences $u_j\in\tl^{\,p}_{f,\varphi}$ converge weakly to the same limit $u\in W^{1,p}(\Omega)$
	(that we want to show that is the unique minimizer of $E_p$ in $ \tl^{\,p}_{f,\varphi}$). Let us show that $u\in \tl^{\,p}_{f,\varphi}$. Rellich-Kondrachov's Theorem implies that, taking a subsequence, $u_j\rightarrow u$ strongly in $L^p(\Omega)$. Then, taking a subsequence, we have that $u_j\rightarrow u$ a.e. in $\Omega$. Thus, since $u_j\geq \varphi$ a.e. $\Omega$ and $u_j-f\in W^{1,p}_0(\Omega)$ for all $j\geq 1$, we get $u\geq \varphi$ a.e. $\Omega$ and $u-f\in W^{1,p}_0(\Omega)$.

Let us prove that $u$ verifies the properties stated in the proposition. Let us start with the fact that $\li_p u\geq 0$ weakly in $\Omega$. Since $\tl^{\,p}_{f,\varphi}$ is a convex set, $u+t(v-u)\in\tl^{\,p}_{f,\varphi}$ for all $ v\in\tl^{\,p}_{f,\varphi}$ and $t\in[0,1]$. Thanks to the fact that $E_{p}$ reaches its minimum in the set $\tl^{\,p}_{f,\varphi}$ at $u$, we have that 
		\begin{equation}
		i(t):=E_p[u+t(v-u)]
	\end{equation}
satisfies $i(t)\geq i(0)$ and therefore $i'(0)\geq 0$. Let us compute
\begin{equation}
	i'(t)=\int_{\Omega}[|\nabla u+t\nabla(v-u)|^{p-2}(\nabla u+t\nabla(v-u))\cdot\nabla(v-u)-h_p(v-u)]dx.
\end{equation}
Taking $t=0$, we get 
\begin{equation}
	\label{iprima0}
	i'(0)=\int_{\Omega}[|\nabla u|^{p-2}\nabla u\cdot\nabla(v-u)-h_p(v-u)]dx\geq 0.
\end{equation}
If we consider $w\in W^{1,p}_0(\Omega)$ $w\geq 0$ a.e. $\Omega$, and $v=u+tw\in\tl^{\,p}_{f,\varphi}$ for $t\geq 0$ small enough, if we come back to \eqref{iprima0}, we obtain
 \begin{equation}
 	t\int_{\Omega}[|\nabla u|^{p-2}\nabla u\cdot\nabla w-h_p w]dx\geq 0
 \end{equation}
with $t\geq 0$. This implies that $$\li_p u\geq 0$$ weakly in $\Omega$. 

Finally, we show that $$\li_p u= 0$$ weakly in $\{u(x) >\varphi(x)\}$ when $\varphi$ is continuous. From Section 3.26 in \cite{libroFinlan} we have that the solution to the obstacle problem with continuous obstacle becomes also continuous (after a redefinition in a set of measure zero). Therefore, for a continuous obstacle the set $\{x:u (x) > \varphi (x) \}$ is an open set.
Take a ball 
$$
\overline{B}_r  \subset \{x:u (x) > \varphi (x) \},
$$
and consider a nonnegative $w\in W^{1,p}_0(B_r) \cap C_0(\overline{B_r})$. Using that $\varphi< u$ in $\partial B_r$, we get $v=u+tw\in\tl^{\,p}_{f,\varphi}$ for $|t|$ small enough. Then, let us come back to \eqref{iprima0} and obtain
\begin{equation}
	t\int_{B_r}[|\nabla u|^{p-2}\nabla u\cdot\nabla w-h_p w]dx\geq 0.
\end{equation}
In this case $t$ can be positive or negative. Thus, we get
\begin{equation}
	\int_{B_r}[|\nabla u|^{p-2}\nabla u\nabla w-h_p w]dx= 0,
\end{equation}
wich implies $$\li_p u= 0$$ weakly in $B_r$ and therefore in $\{u>\varphi\}$.

The other case (an obstacle from above) is analogous.
\end{proof}

This minimizer of the energy is in fact the infimum of weak supersolutions
are above/below the obstacle.

\begin{proposition} \label{prop1}
	Let $u=\ol(\lpp,\varphi, f)$. This function satisfies
	\begin{equation}
	\label{Remark1.1}	u=\inf\Big\{w\in \tl^{\,p}_{f,\varphi}\,:\;\;\lpp w\geq 0\;\;\textrm{weakly in}\;\;\Omega\Big\},
	\end{equation}
 
Analogously, let $v=\ou(\lqq,\varphi,g)$, then
	\begin{equation}
	\label{Remark1.2}	v=\sup\Big\{w\in \tu^{\,q}_{g,\varphi}\,:\;\;\lqq w\leq 0\;\;\textrm{weakly in}\;\;\Omega\Big\}.
\end{equation}
\end{proposition}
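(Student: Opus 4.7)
The plan is to establish the two identities separately; the second is symmetric, so I focus on the assertion for $u = \ol(\lpp,\varphi,f)$. Denote
\[
S = \Big\{w\in \tl^{\,p}_{f,\varphi} \,:\; \lpp w\geq 0 \text{ weakly in } \Omega\Big\}.
\]
First I would observe that $u\in S$: Proposition \ref{prop0} already gives $u\in\tl^{\,p}_{f,\varphi}$ together with $\lpp u\geq 0$ weakly in $\Omega$. Hence the pointwise infimum of $S$ is $\leq u$ a.e., and the whole task reduces to a comparison statement: for every $w\in S$, one has $u\leq w$ a.e.\ in $\Omega$.

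To prove this comparison, I would combine the variational inequality satisfied by $u$ with the supersolution property of $w$, tested against the same nonnegative function $(u-w)_{+}$. Since $u,w\geq\varphi$ a.e., the function $\min(u,w)$ also lies above $\varphi$; and because $W^{1,p}_{0}(\Omega)$ is a lattice, $\min(u,w)-f=\min(u-f,w-f)\in W^{1,p}_{0}(\Omega)$, so $\min(u,w)\in\tl^{\,p}_{f,\varphi}$. Applying the variational inequality from the proof of Proposition \ref{prop0} with test function $v=\min(u,w)$, and using that $\min(u,w)-u=-(u-w)_{+}$, gives
\[
\int_{\Omega}|\nabla u|^{p-2}\nabla u\cdot\nabla (u-w)_{+}\dx+\int_{\Omega} h_{p}(u-w)_{+}\dx\;\leq\;0.
\]
On the other hand, $(u-w)_{+}\in W^{1,p}_{0}(\Omega)$ is a nonnegative admissible test function for the weak supersolution $w$, so
\[
\int_{\Omega}|\nabla w|^{p-2}\nabla w\cdot\nabla (u-w)_{+}\dx+\int_{\Omega} h_{p}(u-w)_{+}\dx\;\geq\;0.
\]
Subtracting and using that $\nabla (u-w)_{+}=\chi_{\{u>w\}}(\nabla u-\nabla w)$ yields
\[
\int_{\{u>w\}}\bigl(|\nabla u|^{p-2}\nabla u-|\nabla w|^{p-2}\nabla w\bigr)\cdot(\nabla u-\nabla w)\dx\;\leq\;0.
\]

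The last step invokes the strict monotonicity of the $p$-Laplace vector field, $(|a|^{p-2}a-|b|^{p-2}b)\cdot (a-b)\geq 0$ with equality iff $a=b$. This forces $\nabla u=\nabla w$ a.e.\ on $\{u>w\}$, i.e.\ $\nabla(u-w)_{+}=0$ a.e.\ in $\Omega$. Since $(u-w)_{+}\in W^{1,p}_{0}(\Omega)$, Poincar\'e's inequality then gives $(u-w)_{+}\equiv 0$, that is, $u\leq w$ a.e. Combined with $u\in S$, this proves $u$ equals the pointwise infimum in \eqref{Remark1.1}. For \eqref{Remark1.2} the argument is entirely parallel, with $\max(v,w)\in\tu^{\,q}_{g,\varphi}$ playing the role of $\min(u,w)$ and $(w-v)_{+}$ the role of $(u-w)_{+}$. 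The only delicate points to check carefully will be the lattice arguments placing $\min(u,w)$ in the admissible class $\tl^{\,p}_{f,\varphi}$ and the $W^{1,p}_{0}$-membership of $(u-w)_{+}$; the rest is the familiar monotonicity identity for $-\Delta_{p}$.
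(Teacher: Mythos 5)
Your argument is correct and follows the same route as the paper's: test the variational inequality for $u$ against $\min(u,w)\in\tl^{\,p}_{f,\varphi}$, test the weak supersolution property of $w$ against the nonnegative function $(u-w)_{+}=u-\min(u,w)\in W^{1,p}_{0}(\Omega)$, subtract, and invoke strict monotonicity of the $p$-Laplace vector field to conclude $u\leq w$. Your explicit Poincar\'e step at the end is in fact a slightly cleaner finish than the paper's terse ``$|\{u>z\}|=0$.''
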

\begin{proof}
	Let us consider $$\overline{u}=\inf\Big\{w\in \tl^{\,p}_{f,\varphi}\,:\;\;\lpp w\geq 0\;\;\textrm{weakly in}\;\;\Omega\Big\}.$$
	We will prove that $\overline{u}=u$ in $\Omega$.
	
	Let us start proving that $u\geq \overline{u}$. By Proposition \ref{prop0}, we have
	$$
	u\in\Big\{w\in \tl^{\,p}_{f,\varphi}\,:\;\;\lpp w\geq 0\;\;\textrm{weakly in}\;\;\Omega\Big\},
	$$
	and then we get $$u\geq \overline{u}=\inf\Big\{w\in \tl^{\,p}_{f,\varphi}\,:\;\;\lpp w\geq 0\;\;\textrm{weakly in}\;\;\Omega\Big\}.$$
	
	Now, let us prove that $u\leq \overline{u}$. Given $w\in\tl^{\,p}_{f,\varphi}$ such that $\lpp w\geq 0$ weakly in $\Omega$, we have
	\begin{equation}
			\int_{\Omega}[|\nabla w|^{p-2}\nabla w\cdot\nabla v-h_p v]dx\geq 0 \quad \mbox{for all} \  v\in W^{1,p}_0(\Omega)
	\end{equation}
and then, using \eqref{iprima0}, we get
\begin{equation}
	\label{des2}
	\int_{\Omega}[|\nabla u|^{p-2}\nabla u\cdot\nabla(v-u)-h_p(v-u)]dx\geq 0 \quad \mbox{for all} \ v\in\tl^{\,p}_{f,\varphi} (\Omega).
\end{equation}
Let us consider $z=\min\{w,u\}\in\tl^{\,p}_{f,\varphi}$, then $u-z\in W^{1,p}_0(\Omega)$, $u-z\geq 0$. Then
\begin{equation}
0\leq\int_{\Omega}[|\nabla w|^{p-2}\nabla w\cdot\nabla (u-z)-h_p (u-z)]dx 
\end{equation}
and, using  \eqref{des2}, we obtain
\begin{equation}
0\geq\int_{\Omega}[|\nabla u|^{p-2}\nabla u\cdot\nabla(u-z)-h_p(u-z)]dx.
\end{equation}
Substracting these inequalities we conclude that
\begin{equation}
	0\leq\int_{\Omega}[(|\nabla w|^{p-2}\nabla w-|\nabla u|^{p-2}\nabla u)\cdot\nabla (u-z)]dx, 
\end{equation}
which implies 
\begin{equation}
	0\leq\int_{\{u>z\}}[(|\nabla w|^{p-2}\nabla w-|\nabla u|^{p-2}\nabla u)\cdot\nabla (u-w)]dx 
\end{equation}
using that $(|b|^{p-2}b-|a|^{p-2}a)(a-b)\leq 0$ (with a strict inequality for $a\neq b$) we obtain $|\{u>z\}|=0$. Then, $u=z\leq w$. Thus, we have obtained that $u\leq \overline{u}$.

The other case is analogous.
\end{proof}

\begin{remark}
{\rm Notice that Proposition \ref{prop1} implies that the solution to the obstacle problem
is monotone with respect to the obstacle in the sense that, for $\varphi_1\geq \varphi_2$, we have $u_1=\ol(\lpp,\varphi_1, f) \geq u_2=\ol(\lpp,\varphi_2, f)$.
}
\end{remark}

Now, we are ready to introduce the definition of a weak solution to the 
two membranes problem.

\begin{definition}
	\label{df1}
	Let $f\in W^{1,p}(\Omega)$ and $g\in W^{1,q}(\Omega)$ be two functions such that $f\geq g$ in $\partial\Omega$ in the sense of traces. Take $h_p\in W^{-1,p}(\Omega)$ and $h_{q}\in W^{-1,q}(\Omega)$. We say that the pair $(u,v)$ with $u\in \tl^{\,p}_{f,v} $, $v\in \tu^{\,q}_{g,u}$ is a solution of the two membranes problem if 
    \begin{equation*}
    	(u,v) \;\;\textrm{satisfies} \;\;
    \begin{cases}
    	\displaystyle E_{p}(u)=\min_{w\in \tl^{\,p}_{f,v}}\Big\{E_{p}(w)\Big\},\\
    	\displaystyle E_{q}(v)=\min_{w\in \tu^{\,q}_{g,u}}\Big\{E_{q}(w)\Big\},
    \end{cases}
    \end{equation*}
i.e.,
\begin{equation*}
	u=\ol( \lpp,v,f) \;\;\textrm{and}\;\; v=\ou( \lqq,u,g),
	\end{equation*}
\end{definition}

 \begin{remark}
 	In general, given a pair $(f,g)$, the solution to the two membranes problem
	is not unique. 
 \end{remark}

\subsection{Iterative method. }

The main result of this section reads as follows.

\begin{theorem}\label{prop2}
	For $p\geq q $ let 
	$f\in W^{1,p}(\Omega)$ and $g\in W^{1,q}(\Omega)$ be two functions such that $f\geq g$ in $\partial\Omega$ in the sense of traces and take $h_p\in W^{-1,p}(\Omega)$ and $h_{q}\in W^{-1,q}(\Omega)$. Let us
	consider $v_{0}$ a weak subsolution of $ \lqq v=0$ in $\Omega$  such that $v_{0}-g\in W^{1,q}_{0}(\Omega)$ and then define inductively the sequences
	\begin{equation*}
		u_{n}=\ol(\lpp,v_{n},f ),\qquad
		v_{n}=\ou( \lqq,u_{n-1},g).
	\end{equation*}
	
	Both sequences of functions $\{u_{n}\}_{n=0}^{\infty}\subset W^{1,p}(\Omega)$, $\{v_{n}\}_{n=0}^{\infty}\subset W^{1,q}(\Omega)$ converge strongly in $W^{1,p}(\Omega)$ and $W^{1,q}(\Omega)$, respectively. Moreover, the limits of the sequences are a solution of the two membranes problem. That is, there exists a pair of functions  $u_{\infty}\in\tl^{\,p}_{f,\vi}$ and $v_{\infty}\in \tu^{\,q}_{g,\ui}$ such that  
	\begin{equation*}
		u_{n}\to u_{\infty}\;\;\textrm{strongly in} \;\;W^{1,p}(\Omega)\;\;\textrm{and}\;\;v_{n}\to v_{\infty}\;\;\textrm{strongly in}\;\; W^{1,q}(\Omega),
	\end{equation*}
	and, in addition, the limit pair $(u_{\infty},v_{\infty})$ satisfies
	\begin{equation*}
		u_{\infty}=\ol(\lpp,\vi,f )\;\;\textrm{and}\;\;
		v_{\infty}=\ou(\lqq,\ui,g).
	\end{equation*}
\end{theorem}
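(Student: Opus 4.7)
I would first show by induction that both $\{u_n\}$ and $\{v_n\}$ are non-decreasing and that $v_n\le u_n$ throughout. The base case $u_0\ge v_0$ is built into $u_0=\ol(\lpp,v_0,f)$. For $v_0\le v_1$, the hypothesis that $v_0$ is a weak $\lqq$-subsolution with $v_0-g\in W^{1,q}_0(\Omega)$, together with $v_0\le u_0$, places $v_0$ in the admissible class whose supremum is $v_1$ by Proposition \ref{prop1}; hence $v_1\ge v_0$. The monotonicity of the obstacle problem in the obstacle (the Remark following Proposition \ref{prop1}) then gives $u_0\le u_1$. The inductive step repeats this argument verbatim: by Proposition \ref{prop0} each $v_n$ is itself a $\lqq$-subsolution, and the ordering $v_n\le u_{n-1}\le u_n$ is already in hand, so Proposition \ref{prop1} gives $v_{n+1}\ge v_n$ and obstacle monotonicity propagates to $u_{n+1}\ge u_n$.

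\textbf{Step 2 (Uniform Sobolev bounds).} The crucial observation is that $u_{n-1}$ itself is an admissible competitor for the obstacle problem defining $u_n$: indeed $u_{n-1}-f\in W^{1,p}_0(\Omega)$ and $u_{n-1}\ge v_n$ (the latter because $v_n=\ou(\lqq,u_{n-1},g)$ lies below its own obstacle), so $u_{n-1}\in\tl^{\,p}_{f,v_n}$. The minimality of $u_n$ then yields $E_p(u_n)\le E_p(u_{n-1})\le\cdots\le E_p(u_0)$. Coercivity of $E_p$ on $f+W^{1,p}_0(\Omega)$ (Poincar\'e combined with Young on the linear term in $h_p$) then bounds $\|u_n\|_{W^{1,p}(\Omega)}$ uniformly. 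Since $p\ge q$ and $\Omega$ is bounded, the embedding $L^p(\Omega)\hookrightarrow L^q(\Omega)$ gives a uniform $L^q$-bound on $u_n$. A symmetric argument, using $v_{n-1}\in\tu^{\,q}_{g,u_{n-1}}$ (valid thanks to $v_{n-1}\le v_n\le u_{n-1}$), yields $E_q(v_n)\le E_q(v_0)$ and hence a uniform $W^{1,q}$-bound on $v_n$.

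\textbf{Step 3 (Strong convergence and identification).} Monotonicity and the uniform $L^p$/$L^q$ bounds give pointwise a.e.\ limits $u_n\uparrow u_\infty$, $v_n\uparrow v_\infty$, with strong convergence in $L^p$ and $L^q$ by dominated convergence; the Sobolev bounds then give weak convergence $u_n\rightharpoonup u_\infty$ in $W^{1,p}(\Omega)$ and $v_n\rightharpoonup v_\infty$ in $W^{1,q}(\Omega)$, with the boundary conditions and the inequality $u_\infty\ge v_\infty$ passing to the limit. To upgrade to strong convergence, I would observe that $u_\infty\in\tl^{\,p}_{f,v_n}$ (since $u_\infty\ge v_\infty\ge v_n$), so $E_p(u_n)\le E_p(u_\infty)$; weak lower semicontinuity of $E_p$ gives the opposite inequality in the limit, so $E_p(u_n)\to E_p(u_\infty)$. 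Together with strong $L^p$ convergence (handling the linear term in $h_p$), this forces $\|\nabla u_n\|_{L^p}\to\|\nabla u_\infty\|_{L^p}$, and the uniform convexity of $L^p$ promotes the weak convergence of gradients to strong, giving $u_n\to u_\infty$ in $W^{1,p}$; the argument for $v_n$ is entirely parallel. Finally, for any $w\in\tl^{\,p}_{f,v_\infty}$ one has $w\in\tl^{\,p}_{f,v_n}$ (because $w\ge v_\infty\ge v_n$), so $E_p(u_n)\le E_p(w)$; passing to the limit and using $E_p(u_n)\to E_p(u_\infty)$ gives $E_p(u_\infty)\le E_p(w)$, and uniqueness in Proposition \ref{prop0} yields $u_\infty=\ol(\lpp,v_\infty,f)$, with the symmetric conclusion for $v_\infty$. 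The main technical obstacle is this strong-convergence step, where the monotone energy bookkeeping, weak lower semicontinuity, and uniform convexity of $L^p$ must be combined carefully to pass from weak to strong Sobolev convergence.
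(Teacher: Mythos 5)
Your Steps 1 and 2 are correct, and Step 2 is a mildly different (and arguably cleaner) route than the paper's: you bound $E_p(u_n)$ by observing that $u_{n-1}\in\tl^{\,p}_{f,v_n}$ so the energies $E_p(u_n)$ are non-increasing, whereas the paper introduces a harmonic replacement $w$ with $\lqq w=0$, $w=g$, shows $v_n\le w$ by comparison, and uses $z=\ol(\lpp,w,f)$ as a fixed admissible competitor. Both give the uniform $W^{1,p}$ bound via coercivity, so this difference is cosmetic. The identification $u_\infty=\ol(\lpp,v_\infty,f)$ is also done correctly: for the \emph{upper} sequence, $u_\infty\ge v_\infty\ge v_n$ keeps $u_\infty$ (and any $w\in\tl^{\,p}_{f,v_\infty}$) admissible for every $n$, so energy convergence and uniqueness of the minimizer close the argument exactly as you describe.

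The gap is in the last part of Step 3, where you claim the argument for $v_n$ and $v_\infty$ is ``entirely parallel'' and ``symmetric.'' It is not. The asymmetry is real and is exactly what makes this theorem nontrivial. For $u_n$ the obstacle $v_n$ is \emph{below} and increases, so the limit $u_\infty$ stays above every $v_n$ and remains admissible. For $v_n$ the obstacle $u_{n-1}$ is \emph{above} and also increases, so while $v_\infty\le u_\infty$, there is no reason whatsoever that $v_\infty\le u_{n-1}$ for finite $n$. Hence $v_\infty\notin\tu^{\,q}_{g,u_{n-1}}$ in general, the inequality $E_q(v_n)\le E_q(v_\infty)$ does not follow, and the same obstruction blocks the ``symmetric'' identification: for $w\in\tu^{\,q}_{g,u_\infty}$ one cannot conclude $w\in\tu^{\,q}_{g,u_{n-1}}$. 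This is precisely where the paper interjects ``we cannot repeat the previous argument step by step'' and constructs the corrector sequence $\tilde v_n=v-u_\infty+u_n$ with $v=\ou(\lqq,u_\infty,g)$: since $v\le u_\infty$, $\tilde v_n\le u_n$, so $\tilde v_n\in\tu^{\,q}_{g,u_n}$, and since $u_n\to u_\infty$ strongly in $W^{1,p}(\Omega)$ and $p\ge q$, one has $\tilde v_n\in W^{1,q}(\Omega)$ and $E_q(\tilde v_n)\to E_q(v)$, which rescues the energy sandwich. Note that your proof never genuinely uses the standing hypothesis $p\ge q$ (you mention $L^p\hookrightarrow L^q$ in Step 2, but that is not where it matters); that should have been a warning, since the paper proves the $p<q$ case by a separate theorem with the iteration started from a supersolution. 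As written, your argument for $v_\infty$ does not go through and needs the corrector-sequence idea (or an equivalent device that exploits $p\ge q$).
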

%
%

\begin{proof}	
	Notice that from the definition of the sequences as solutions to
	the obstacle problem we have $$v_{n}\leq u_{n}$$ a.e. $\Omega$ for each $n\in\mathbb{N}$. Since we assumed that the boundary data are ordered, $f\geq g$, and functions in $W^{1,p}(\Omega)$ and in $W^{1,q}(\Omega)$ have a trace on the boundary, we can also say that $u_n\geq v_n$ a.e. on the boundary (with respect to the $(N-1)$-dimensional measure). 
	From the construction of the sequences we obtain that the sets $\tl^{\,p}_{f,v_n}$ and $ \tu^{\,q}_{g,u_n}$ are not empty, 
	in fact we have that $u_{n-1} \in \tl^{\,p}_{f,v_n}$ (since $v_n$ solves an obstacle problem from above with obstacle $u_{n-1}$, we have $v_n \leq u_{n-1}$)
	and $v_n \in \tu^{\,q}_{g,u_n}$ (since $u_n$ solves an obstacle problem from below with obstacle $v_{n}$, we have $v_n\leq u_n$).
Hence the sequences $\{u_{n}\}$ and $\{v_{n}\}$ are well defined.
	
	We will divide our arguments in several steps. 

{\bf First step}: First, we prove a monotonicity result.
	The two sequences are non-decreasing.

Let us see that $v_{0}\leq v_{1}$ a.e. $\Omega$. This is due to the fact that $v_{0}$ is a weak subsolution of $\lqq v=0$ and $v_{0}\in\tu^{q}_{g,u_{0}}$ and by \eqref{Remark1.2} we have
\begin{equation*}
v_{1}=\sup\Big\{w\in\tu^{q}_{g,u_{0}}\,:\;\;  \lqq w\leq 0\;\; \textrm{weakly in}\;\;\Omega\Big\}.
\end{equation*}
Then, we conclude that $v_0\leq v_1$. 

Also we can see that $u_{0}\leq u_{1}$ a.e. $\Omega$. In fact, from \eqref{Remark1.1},
we have
\begin{equation*}
	u_{0}=\inf\Big\{w\in\tl^{p}_{f,v_{0}}\,:\;\; \lpp w\geq 0 \;\;\textrm{weakly in}\;\;\Omega\Big\}
\end{equation*}
and then, using that $v_{1}\geq v_{0}$ a.e. $\Omega$, we obtain that $u_{1}\in\tl^{q}_{f,v_{0}}$ and $u_{1}$ is a weak supersolution for $\lpp u=0$. Then, $u_0\le u_1$ a.e. in $\Omega$.

Now, to obtain the general case, we just use an inductive argument. 
Suppose $v_{n-1}\leq v_{n}$ and $u_{n-1}\leq u_{n}$  a.e. $\Omega$.  Since $v_{n}\leq u_{n}$, we have $v_{n}\in \tu^{q}_{g,u_{n}}$. From \eqref{Remark1.2} we have
\begin{equation*}
	v_{n+1}=\sup \Big\{w\in \tu^{q}_{g,u_{n}}\,:\;\;\lqq w\leq 0\;\;\textrm{weakly in}\;\;\Omega\Big\}\;\;\textrm{and}\;\;\lqq v_{n}\leq 0\;\;\textrm{weakly in}\;\;\Omega
\end{equation*}
which implies $v_{n}\leq v_{n+1}$ a.e. $\Omega$. On the other hand, the last inequality and the fact that  $v_{n+1}\leq u_{n+1}$ give us $u_{n+1}\in \tl^{p}_{f,v_{n}}$. Hence, \eqref{Remark1.1} implies that
\begin{equation*}
	u_{n}=\inf \Big\{w\in \tl^{p}_{f,v_{n}}\,:\;\;\lpp w\geq 0\;\;\textrm{weakly in}\;\;\Omega\Big\}\;\;\textrm{and}\;\;\lpp u_{n+1}\geq 0\;\;\textrm{weakly in}\;\;\Omega,
\end{equation*}
and we conclude that $u_{n}\leq u_{n+1}$ a.e. $\Omega$.

{\bf Second step}: Our next step is to show that the sequences $\{u_{n}\}_{n=1}^{\infty}$ and $\{v_{n}\}_{n=1}^{\infty}$ are bounded in $W^{1,p}(\Omega)$ and $W^{1,q}(\Omega)$, respectively. To prove this fact let us take $w\in W^{1,q}(\Omega)$ the solution to $\lqq w=0$ with boundary datum $g$. That is, $w$ satisfies
\begin{equation*}
	\lqq w=0\;\;\textrm{ weakly in}\;\;\Omega \;\;\textrm{and}\;\;w=g\;\;\textrm{in}\;\;\partial\Omega\;\;\textrm{in the sense of traces.}
\end{equation*}
 By \eqref{Remark1.2}, we have that 
 $\lqq v_{n}\leq 0$ weakly in $\Omega$, and thanks to $v_{n}-w\in W^{1,q}_{0}(\Omega)$ and the comparison principle for $\lqq$, we obtain that $$v_{n}\leq w$$ a.e. $\Omega$. 
 Let us consider $z=\ol(\lpp, w,f)$. Since $v_{n}\leq w$ we have  $z\in\tl^{p}_{f,v_{n}}$ for all $n\in\mathbb{N}$. Now, since $u_n$ is the solution to the obstacle
 problem, $u_{n}$ minimizes the energy among functions
 in the set $\tl^{p}_{f,v_{n}}$. Then, as $z$ is in the former set, we get $$ E_{p}(u_{n})\leq E_{p}(z)$$ for all $n\in\mathbb{N}$, that is, we have
\begin{equation*}
	\iio\frac{|\nabla u_{n}|^{p}}{p}+\iio h_{p}u_{n}\leq \iio\frac{|\nabla z|^{p}}{p}+\iio h_{p}z.
\end{equation*}
We can rewrite this inequality to obtain
\begin{equation}
	\label{ret1}
	\iio|\nabla u_{n}|^{p}\leq C(z,p,h_p)+C(p,h_p)\lVert u_n\rVert_{L^p(\Omega)}.
\end{equation}
Using that $u_{n}-f\in W^{1,p}_{0}(\Omega)$, from Poincare's inequality, we obtain 
\begin{equation*}
	\|u_{n}-f\|_{L^{p}(\Omega)}\leq C \|\nabla u_{n}-\nabla f\|_{L^{p}(\Omega)} \;\;\textrm{for all}\;\;n\in\mathbb{N}.
\end{equation*}
Then, by the triangle inequality we get
\begin{equation*}
	\|u_{n}\|_{L^{p}(\Omega)}\leq C(f)+ C \|\nabla u_{n}\|_{L^{p}(\Omega)} \;\;\textrm{for all}\;\;n\in\mathbb{N}.
\end{equation*}
If we come back to \eqref{ret1}, we obtain
\begin{equation*}
	\iio|\nabla u_{n}|^{p}\leq C(z,p,h_p,f)+C(p,h_p,f)\lVert \nabla u_n\rVert_{L^p(\Omega)}.
\end{equation*}
Now, if we use Young's inequality $ab\leq \varepsilon a^p+C(\varepsilon)b^{p'}$ with $\varepsilon=\frac{1}{2}$ in the last term, we conclude
\begin{equation*}
	\iio |\nabla u_{n}|^{p}\leq C(z,p,h_p,f)\;\;\textrm{for all}\;\;n\in\mathbb{N}.
\end{equation*}
Thus, we get the desired bound for $u_n$, there exists
a constant $C$ independent of $n$ such that
\begin{equation*}
	\|u_n\|_{W^{1,p}(\Omega)}\leq C \ \ \textrm{for all}\;\;n\in\mathbb{N}.
\end{equation*}

Now, we prove that $\{v_{n}\}$ is bounded in $W^{1,q}(\Omega)$. Since $v_{0}\leq v_{n}\leq u_{n-1}$ we have that $v_{0}\in \tl^{q}_{g,u_{n-1}}$ for all $n\in\mathbb{N}$. This implies
$$E_{q}(v_{n})\leq E_{q}(v_{0}).$$ That is, 
\begin{equation*}
	 \iio\frac{|\nabla v_{n}|^{q}}{q}+\iio h_{q}v_{n}\leq \iio\frac{|\nabla v_{0}|^{q}}{q}+\iio h_{q}v_{0}.
\end{equation*}
Now, we just proceed as before to obtain
\begin{equation*}
	\|v_n\|_{W^{1,q}(\Omega)}\leq C \ \ \textrm{for all}\;\;n\in\mathbb{N}.
\end{equation*}

{\bf Third step}: We show strong convergence to a solution of the two membranes problem.

Since $\{u_{n}\}$ and $\{v_{n}\}$ are bounded in $W^{1,p}(\Omega)$ and $W^{1,q}(\Omega)$ respectively, there exist subsequences $\{u_{n_{j}}\}\subset\{u_{n}\}$ and $\{v_{n_{j}}\}\subset\{v_{n}\}$  such that
\begin{equation*}
	u_{n_{j}}\rightharpoonup\ui\;\;\textrm{weakly in}\;\;W^{1,p}(\Omega)
	\quad \mbox{and}\quad  v_{n_{j}}\rightharpoonup\vi\;\;\textrm{weakly in}\;\;W^{1,q}(\Omega),
\end{equation*}
as $ n_{j}\to\infty$.

By the Rellich-Kondrachov theorem, there exists a subsequence $\{u_{n_j}\}$ that converges strongly in $L^p(\Omega)$ and a subsequence $\{v_{n_j}\}$
that converges strongly in $L^q(\Omega)$. Extracting again a subsequence if needed, we get
\begin{equation*}
	u_{n_{j}}\longrightarrow\ui\quad \textrm{and}\quad v_{n_{j}}\longrightarrow\vi\;\;
	\textrm{a.e. } \Omega,
\end{equation*} 
as $ n_{j}\to\infty$.
Using that $\{u_n\}$ and $\{v_n\}$ are increasing we obtain that the entire sequences converge pointwise (and weakly) to the unique limits
(that are given by the supremums of the sequences). 
This implies that $\ui\geq\vi$ a.e. $\Omega$, $\ui=f$ and $\vi=g$ a.e. $\partial\Omega$ in the sense of traces. This gives us $\ui\in\tl^{p}_{f,\vi}$ and $\vi\in\tu^{q}_{g,\ui}$.

Let us define $u=\ol(\lpp,\vi,f)$ and $v=\ou(\lqq,\ui,g)$. Our next step is to prove
that $u=\ui$ and $v=\vi$ a.e. $\Omega$ and to get that the convergence is strong in 
the corresponding Sobolev spaces.

First, we begin with the case of the upper membranes $u_{n}$. Since $u=\ol(\lpp,\vi,f)$ and $\ui\in \tl^{p}_{f,\vi}$ we have that $E_{p}(u)\leq E_{p}(\ui)$. On the other hand, since $\{v_{n}\}$ is an increasing sequence we have $\ui\geq\vi\geq v_{n}$ for each $n\in\mathbb{N}$ wich implies $u\in\tl^{p}_{f,v_{n}}$. Then, we have that $E_{p}(u_{n})\leq E_{p}(u)$.

On the other hand, by the semicontinuity of the norm, $u_{n}=\ui$ in $\partial\Omega$ and by weak convergence, we obtain 
\begin{equation*}
	\|\nabla \ui\|_{L^{p}(\Omega)}\leq \liminf\limits_{n\to\infty}\|\nabla u_{n}\|_{L^{p}(\Omega)},\;\; \iio h_{p}u_{n}\longrightarrow\iio h_{p}\ui\;\;\textrm{as}\;\;n\to\infty.
\end{equation*} Then, we have
\begin{equation}
\label{2}	E_{p}(\ui)\leq \liminf\limits_{n\to\infty}E_{p}(u_{n})\leq \limsup\limits_{n\to\infty}E_{p}(u_{n})\leq E_{p}(u)\leq E_{p}(\ui).
\end{equation}
Thus, $E_{p}(\ui)=E_{p}(u)$. And since the energy minimizer is unique, $\ui=u$ a.e. $\Omega$. Moreover, by \eqref{2} and the weak convergence, we have obtained that $\{u_{n}\}$ is a minimizing sequence of the energy and the gradient of $ u_{n}$ converges strongly to the gradient of $\ui$ as $n$ goes to infinity. Then, we conclude that
\begin{equation*}
	u_{n}\longrightarrow\ui\;\;\textrm{strongly in}\;\;W^{1,p}(\Omega)\;\;\textrm{as}\;\;n\to\infty.
\end{equation*}

Once we have seen that $\ui=\ol(\lpp,\vi,f)$, let us prove that $\vi=\ou(\lqq,\ui,g)$.
Again, since $\vi\in\tu^{q}_{g,\ui}$, we have that $E_{p}(v)\leq E_{p}(\vi)$.

Note that we cannot repeat the previous argument step by sept because we do not know if  $v$ is in $\tu^{q}_{g,u_{n-1}}$. Though, it  is enough to change a little our strategy
(we use at this point that  $p\geq q$). We construct a subsequence $\{\tilde{v}_{n}\}$ such that $\tilde{v}_{n}\in \tu^{q}_{g,u_{n-1}}$ and $E_{p}(\tilde{v}_{n})$ converges to $E_{p}(v)$ as $n$ goes to infinity.
We define $\{\tilde{v}_{n}\}$ as $$\tilde{v}_{n}=v-\ui+u_{n}.$$ 
Since $p\geq q$ we have that  $\tilde{v}_{n}\in W^{1,q}(\Omega)$, $\tilde{v}_{n}-g\in W^{1,q}_0(\Omega)$ and $E_{q}(\tilde{v}_{n})$ goes to $E_{q}(v)$ as $n\to\infty$ due to the Rellick-Kondrachov Compactness Theorem and the fact that $u_{n}$ converges to $\ui$ in $W^{1,p}(\Omega)$. Also we have the inequality $\tilde{v}_{n}\leq u_{n}$ because $v\leq \ui$. Then, $\tilde{v}_{n}\in \tu^{q}_{g,u_{n}}$ which implies $E_{q}( v_{n+1})\leq E_{q}(\tilde{v}_{n})$.
Besides, by the semicontinuity of the norm, the fact that $v_{n}=\vi$ on $\partial\Omega$ and the weak convergence, we obtain
\begin{equation*}
	\|\nabla \vi\|_{L^{q}(\Omega)}\leq \liminf\limits_{n\to\infty}\|\nabla v_{n+1}\|_{L^{q}(\Omega)},\;\;\iio h_{q}v_{n+1}\longrightarrow\iio h_{q}\vi\;\;\textrm{as}\;\;n\to\infty.
\end{equation*} As a consequence,
\begin{equation*}
\begin{array}{l}
\displaystyle 
	E_{q}(\vi)\leq \liminf\limits_{n\to\infty}E_{q}(v_{n+1})\leq\limsup\limits_{n\to\infty}E_{q}(v_{n+1}) \\[10pt]
	\qquad \displaystyle 
	\leq \limsup\limits_{n\to\infty}E_{q}(\tilde{v}_{n})= E_{q}(v)\leq E_{p}(\vi).
	\end{array}
\end{equation*}
Then, $E_{q}(\vi)=E_{q}(v)$. By the uniqueness of the minimizer, $\vi=v$ a.e. $\Omega$. Furthermore, from the same reasons as in the case of the upper membranes, we have that the entire sequence converges strongly,
\begin{equation*}
		v_{n}\longrightarrow\vi\;\;\textrm{strongly in}\;\;W^{1,q}(\Omega)\;\;\textrm{as}\;\;n\to\infty.
\end{equation*}
This ends the proof.
\end{proof}

If $p<q$ we can also construct a pair of sequences that converges in $W^{1,p}(\Omega)$ and $W^{1,q}(\Omega)$ to a solution of the two membranes problem.
In this case we just have to start the iterations of the obstacle problems with $u_{0}$ a weak  supersolution to $ \lpp u=0$.

\begin{theorem}
	For $p<q$ let 
	$f\in W^{1,p}(\Omega)$ and $g\in W^{1,q}(\Omega)$ be two functions such that $f\geq g$ in $\partial\Omega$ in the sense of traces and take $h_p\in W^{-1,p}(\Omega)$ and $h_{q}\in W^{-1,q}(\Omega)$. 
	Take $u_{0}$ a  weak supersolution to $ \lpp u=0$ in $\Omega$ such that \mbox{$u_{0}-f\in W^{1,p}_{0}(\Omega)$} and then let
	\begin{equation*}
		u_{n}=\ol(\lpp,v_{n-1},f ),\qquad
		v_{n}=\ou( \lqq,u_{n},g).
	\end{equation*}

	Both sequences $\{u_{n}\}_{n=0}^{\infty}\subset W^{1,p}(\Omega)$ and $\{v_{n}\}_{n=0}^{\infty}\subset W^{1,q}(\Omega)$ converge strongly in $W^{1,p}(\Omega)$ and $W^{1,q}(\Omega)$, respectively. Moreover, the limits of the sequences are a solution of the two membranes problem. That is, there exists a pair of functions  $u_{\infty}\in\tl^{\,p}_{f,\vi}$ and $v_{\infty}\in \tu^{\,q}_{g,\ui}$ such that  
	\begin{equation*}
		u_{n}\to u_{\infty}\;\;\textrm{strongly in} \;\;W^{1,p}(\Omega)\;\;\textrm{and}\;\;v_{n}\to v_{\infty}\;\;\textrm{strongly in}\;\; W^{1,q}(\Omega),
	\end{equation*}
	and, in addition, the limit pair $(u_{\infty},v_{\infty})$ satisfies
	\begin{equation*}
		u_{\infty}=\ol(\lpp,\vi,f )\;\;\textrm{and}\;\;
		v_{\infty}=\ou(\lqq,\ui,g).
	\end{equation*}
\end{theorem}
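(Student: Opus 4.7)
The proof runs parallel to that of Theorem~\ref{prop2}, with both sequences now non-increasing and with the key use of the comparison between $p$ and $q$ shifted from the $\{v_n\}$ update to the $\{u_n\}$ update. The first step is monotonicity. Setting $v_0 := \ou(\lqq, u_0, g)$ one has $v_0 \leq u_0$, and since $u_0$ is a weak supersolution of $\lpp u = 0$ with boundary datum $f$, it lies in $\tl^{\,p}_{f, v_0}$; Proposition~\ref{prop1} then gives $u_1 = \ol(\lpp, v_0, f) \leq u_0$. The monotonicity of the obstacle problem with respect to the obstacle (the remark after Proposition~\ref{prop1}) yields $v_1 = \ou(\lqq, u_1, g) \leq \ou(\lqq, u_0, g) = v_0$, and an induction passing the inequalities through $\ol$ and $\ou$ establishes that both $\{u_n\}$ and $\{v_n\}$ are non-increasing.

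For uniform bounds, $\{u_n\}$ in $W^{1,p}(\Omega)$ is handled by using $u_0$ as competitor: from $v_{n-1} \leq v_0 \leq u_0$ one has $u_0 \in \tl^{\,p}_{f, v_{n-1}}$, hence $E_p(u_n) \leq E_p(u_0)$, and the Poincar\'e/Young argument of Theorem~\ref{prop2} yields $\|u_n\|_{W^{1,p}(\Omega)} \leq C$. The bound on $\{v_n\}$ in $W^{1,q}(\Omega)$ is the main obstacle, since the natural candidate $v_0$ is only known to be below $u_0$ rather than below $u_n$ for $n \geq 1$. I would address this by exploiting that each $u_n$ is a weak supersolution of $\lpp u = 0$ with boundary $f$, so that by the comparison principle $u_n \geq w_f$, where $w_f$ is the $\lpp$-solution with boundary $f$, and then constructing a fixed competitor $w^\ast \in W^{1,q}(\Omega)$ with $w^\ast - g \in W_0^{1,q}(\Omega)$ and $w^\ast \leq w_f$ in $\Omega$ (for instance by modifying a $W^{1,q}$ extension of $g$ to be sufficiently negative in the interior, using that $g \leq f$ on the boundary). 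Such a $w^\ast$ lies in $\tu^{\,q}_{g, u_n}$ for every $n$, so $E_q(v_n) \leq E_q(w^\ast)$ and the same Poincar\'e/Young argument delivers the desired bound.

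For strong convergence, Banach--Alaoglu together with the monotonicity of the sequences provides entire-sequence pointwise-a.e.\ and weak convergence to $\ui \in W^{1,p}(\Omega)$ and $\vi \in W^{1,q}(\Omega)$, with $\ui \in \tl^{\,p}_{f, \vi}$ and $\vi \in \tu^{\,q}_{g, \ui}$. Set $u = \ol(\lpp, \vi, f)$ and $v = \ou(\lqq, \ui, g)$. For $\{v_n\}$ the competitor is direct: since $u_n \geq \ui$, we have $v \leq \ui \leq u_n$, so $v \in \tu^{\,q}_{g, u_n}$ and $E_q(v_n) \leq E_q(v)$; weak lower semicontinuity of $E_q$ forces $E_q(\vi) = E_q(v)$, hence $\vi = v$ by uniqueness of the minimizer, and $v_n \to \vi$ strongly in $W^{1,q}(\Omega)$ by the same semicontinuity/minimization argument used in Theorem~\ref{prop2}. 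For $\{u_n\}$ the direct competitor $u$ may fail to lie in $\tl^{\,p}_{f, v_{n-1}}$ (since $u \geq \vi$ but possibly $u \not\geq v_{n-1}$); this is resolved by the shifted competitor $\tilde u_n := u + (v_{n-1} - \vi)$, which satisfies $\tilde u_n \geq v_{n-1}$ and $\tilde u_n - f = (u - f) + (v_{n-1} - \vi) \in W_0^{1,p}(\Omega)$. This last containment is the only place where the hypothesis $p < q$ enters the argument: $v_{n-1} - \vi \in W_0^{1,q}(\Omega) \subset W_0^{1,p}(\Omega)$ precisely because $p < q$. Strong convergence $v_n \to \vi$ in $W^{1,q}(\Omega)$ then gives $\tilde u_n \to u$ strongly in $W^{1,p}(\Omega)$ and $E_p(\tilde u_n) \to E_p(u)$, closing the argument as in Theorem~\ref{prop2}.
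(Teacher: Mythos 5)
Your proof fills in, carefully and correctly, what the paper leaves almost entirely implicit: the paper's own proof of this theorem is three sentences, essentially instructing the reader to ``reproduce the $p\geq q$ proof with the roles of $u$ and $v$ exchanged,'' and you have done exactly that. The monotonicity argument, the bound on $\{u_n\}$ via the fixed competitor $u_0$, the direct competitor $v=\ou(\lqq,\ui,g)\leq\ui\leq u_n$ for the strong convergence of $\{v_n\}$, and in particular the shifted competitor $\tilde u_n=u+(v_{n-1}-\vi)$ with the observation that $v_{n-1}-\vi\in W_0^{1,q}(\Omega)\hookrightarrow W_0^{1,p}(\Omega)$ is precisely where $p<q$ is used, all mirror the $p\geq q$ case correctly and match the paper's intent.

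The one place where your argument is softer than it needs to be is the uniform $W^{1,q}$ bound on $\{v_n\}$. You propose to manufacture a fixed $w^\ast\in W^{1,q}(\Omega)$ with trace $g$ lying below $w_f$, the $\lpp$-solution with datum $f$, by ``modifying a $W^{1,q}$ extension of $g$ to be sufficiently negative in the interior.'' This is not obviously possible: $w_f$ is only in $W^{1,p}(\Omega)$ with $p$ possibly small, so nothing in the hypotheses forces $w_f$ to be bounded below, and without a lower bound the truncation idea breaks down. There is a much simpler route, which is the exact mirror of what the paper uses in the $p\geq q$ case: since $u_n=\ol(\lpp,v_{n-1},f)$ satisfies $u_n\geq v_{n-1}$, the previous iterate $v_{n-1}$ lies in $\tu^{\,q}_{g,u_n}$, hence $E_q(v_n)\leq E_q(v_{n-1})$, and inductively $E_q(v_n)\leq E_q(v_0)$ for all $n$. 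Combined with the Poincar\'e/Young manipulation you already invoke, this gives the uniform $W^{1,q}$ bound without constructing any new competitor. With that replacement, your proof is complete and aligned with the paper's approach.
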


\begin{proof}
	The difference between these sequences and the sequences defined in proposition \eqref{prop2} is that, here, $\{u_{n}\}$ and $\{v_{n}\}$ are decreasing sequences.
	
	Due to the monotonicity of the sequences, we can prove that $u_{n}$ converges weakly to some $\ui$ in $W^{1,p}(\Omega)$ and $v_{n}$ converges to some $\vi$ weakly in $W^{1,q}(\Omega)$ as $n$ goes to infinity.
	
	The strong convergence of $v_{n}$ to $\vi$ is given in the same way that we got strong convergence for $u_{n}$ to $\ui$ in proposition \eqref{prop2} thanks to $\{v_{n}\}$ is a decreasing sequence. On the other hand, for $\{u_{n}\}$ we can reproduce the proof for $\{v_{n}\}$ in proposition \eqref{prop2} because  $W^{1,q}(\Omega)\hookrightarrow W^{1,p}(\Omega)$ continuously.
\end{proof}

\begin{remark} An alternative idea to deal with the case 
$p<q$ runs as follows:
It can be easily proved that 
	\begin{equation*}
		u=\ol(-\Delta_{p}+h_{p},\varphi, f)\;\;\textrm{if and only if}\;\;u=\ou(-\Delta_{p}-h_{p},-\varphi, -f).
	\end{equation*} 
Then, if $p<q$ we consider 
\begin{align*}
	&p'=q,\;\;\; h_{p'}=-h_{q},\;\;\; f'=-g,\\&q'=p,\;\;\; h_{q'}=-h_{p},\;\;\; g'=-f.
\end{align*}
For the problem with the new set of parameters,
$p'$, $q'$, $h_{p'}$, $h_{q'}$, $f'$ and $g'$, 
we can apply the iterative method described in Theorem \eqref{prop2} and get a pair $(\ui',\vi')$ such that $\ui\in\tl^{p'}_{f',\vi'}$ and $\vi'\in\tu^{q'}_{g',\ui'}$ such that $\ui'=\ol(-\Delta_{p'}+h_{p'},\vi',f')$ and $\vi'=\ou(-\Delta_{q'}+h_{q'},\ui',g')$. Thus, $\ui=-\vi'$ and $\vi=-\ui'$ are in $\tl^{p}_{f,\vi}$ and $\tu^{q}_{g,\ui}$ respectively and
\begin{equation*}
	\ui=\ol( \lpp,\vi,f), \;\;\textrm{and}\;\; \vi=\ou( \lqq,\ui,g),
\end{equation*}
i.e, $(\ui,\vi)$ is a solution for the original two membranes problem.
\end{remark}

\begin{remark}  \label{rem4}
We remark that the limit depends strongly on the initial function
from where we start the iterations. We would like to highlight that $\{u_{n}\}$ and $\{v_{n}\}$ converge to $u_{\infty}$ and $v_{\infty}$ strongly in $W^{1,p}(\Omega)$ and $W^{1,q}(\Omega)$ respectively due to the fact that both sequences are monotone. The monotonicity, in turn, depends on the initial function for the iteration. Specifically, when $p\geq q$, the monotonicity arises because the initial datum $v_{0}$ is a weak subsolution of $\mathcal{L}_{q}v=0$. On the other hand, when $p\leq q$ it stems from the fact that $u_{0}$ is a weak supersolution of $\mathcal{L}_{p}u=0$.

If one considers the pair $(u,v)$ obtained as follows:
let $u$ be the solution to 
$ \lpp u=0$ in $\Omega$ with $u=f$ on $\partial \Omega$ and $v$ 
the solution to the corresponding obstacle problem 
$v=\ou( \lqq,u,g)$, then, as we have mentioned in the introduction
this pair $(u,v)$ is a solution to the two membranes problem. Now,
if one starts the iteration procedure with $u_0 =u$ we obtain that the sequences
converge after only one step. In fact we get
 $v_1 =\ou( \lqq,u,g)$
and next the iteration gives again, that is, we have $u =\ol( \lpp,v_1,f)$. 
This is due to the fact that $u$ is a solution to
$ \lpp u=0$ in the whole $\Omega$ with $u\geq v_1$ and therefore it is
the solution to the obstacle problem (with $v_1$ as obstacle from below).  
Analogously, if one starts with $v$ the solution to 
$ \lqq v = 0$ in $\Omega$ with $v=g$ on $\partial \Omega$ we obtain $u$ 
as the solution to the corresponding obstacle problem 
$u=\ol( \lpp,v,f)$. This pair $(u,v)$ is also a solution to the 
 two membranes problem, but, in general the two pairs are different. 
\end{remark}

\subsection{Extension to the two membranes problem for nonlocal operators }
If we consider 
$$
	\lpp:=(-\Delta_{p})^{s}+h_{p}\quad \mbox{ and } \quad
	\lqq:=(-\Delta_{q})^{t}+h_{q}
$$
with
$$
(-\Delta_{p})^{s} (u) (x) = \int_{\mathbb{R}^N} 
\frac{|u(y) -u(x)|^{p-2} (u(y) -u(x))}{|x-y|^{N+sp}} dy,
$$
the fractional $p-$Laplacian. Notice that $\lpp$ and 
$\lqq$
 are associated with the energies
 $$
E^s_{p}(w)=\frac{1}{p}
 \int_{\mathbb{R}^N}\int_{\mathbb{R}^N} 
\frac{|u(y) -u(x)|^{p}}{|x-y|^{N+sp}} dy \, dx+\int_{\Omega}h_{p}w 
$$
and
$$
 E^t_{q}(w)=\frac{1}{q} \int_{\mathbb{R}^N}\int_{\mathbb{R}^N} 
\frac{|u(y) -u(x)|^{q}}{|x-y|^{N+tq}} dy \, dx+\int_{\Omega}h_{q}w.
$$

Our iterative method of the Theorem \eqref{prop2} gives us a pair $(\ui,\vi)$ that is solution of the ``new'' two membranes problem  whenever
\begin{equation}
	\label{01} W^{s,p}(\Omega)\hookrightarrow W^{t,q}(\Omega)\;\;\textrm{continuosly.}
\end{equation}

	To be able to define a variational solution of the obstacle problem for these new $\lpp$ and $\lqq$ is nedeed that $\tl^{p}_{f,\varphi}$ and  $\tu^{q}_{g,\varphi}$ are closed sets under the weak topology. This is given by Mazur's Theorem if $W^{s,p}(\Omega)$ and $W^{t,q}(\Omega)$ are reflexive Banach space and $\tl^{p}_{f,\varphi}$ and $\tu^{q}_{g,\varphi}$ are closed convex sets (with the norm) in $W^{s,p}(\Omega)$ and $W^{t,q}(\Omega)$ respectively.


\section{Viscosity solutions}
\label{sect3}

In this section we aim to prove similar convergence results for iterations of the obstacle
problem to a pair that solves the two membranes problems when the involved
operators are not variational. Here we will
consider two nonlinear elliptic operators and we understand solutions in the viscosity sense.
To define the sequences as solutions to the corresponding obstacle problems we
need to assume that the involved operators verify the following set of conditions: 

{\bf {Hypothesis:}}
	The operators $\lu$ and $\ld$ verify
		\begin{itemize}
			\item $\li w= F(D^{2}w,\nabla w)-h(x)$ with $F$ continuous in both coordinates, such that the comparison principle holds.   
		\item If  $f$ is continuous, there exists a unique $w\in C(\overline{\Omega})$ solution to $\li w=0$ in $\Omega$, $w=f$ in $\partial\Omega$.
		\item If $\varphi$, $\psi$ and $f$, $g$ are continuous, 
		then the corresponding solutions to the obstacle problems, $u=\ool(\lu,\varphi,f)$ and $v=\oou(\ld,\psi,g)$ are continuous. See Definition \ref{def3.4} below.
	\end{itemize}


To be precise, working in the viscosity sense, we need to introduce the definition of
semicontinuous functions.  

\begin{definition}
	$f:\Omega\longrightarrow\RR$ is a lower semicontinuous function, l.s.c,  at $x\in\Omega$ if for each $\ep>0$ there exists a $\delta>0$ such that 
	\begin{equation*}
		f(x)\leq f(y)+\ep\;\;\textrm{for all}\;\;y\in B_{\delta}(x).
	\end{equation*}
	If a function $f$ is l.s.c. at every $x\in\Omega$, we say that $f$ is l.s.c. in $\Omega.$
	The lower semicontinuous envelope of $f$ is
	\begin{equation*}
		f_{*}=\sup\{h:\Omega\longrightarrow\RR\;\;\textrm{l.s.c.}\,:\;\;h\leq f.\}
	\end{equation*}
	On the other hand, 
	$g:\Omega\longrightarrow\RR$ is a upper semicontinuous function, u.s.c,  at $x\in\Omega$ if for each $\ep>0$ there exists a $\delta>0$ such that 
	\begin{equation*}
		g(y)\leq g(x)+\ep\;\;\textrm{for all}\;\;y\in B_{\delta}(x).
	\end{equation*}
	If a functions $g$ is u.s.c. at every $x\in\Omega$, we say that it is u.s.c. in $\Omega.$
	The upper semicontinuous envelope of $g$ is
	\begin{equation*}
		g^{*}=\inf\{h:\Omega\longrightarrow\RR\;\;\textrm{u.s.c.}\,:\;\;h\geq f.\}
	\end{equation*}
\end{definition}

Now, we give the precise definition of sub and supersolutions in 
the viscosity sense.

\begin{definition}
 The function $u:\Omega\longrightarrow \RR$ is called a $\li$-viscosity supersolution if its lower semicontinuous envelope function, $u_{*}$, satisfies the following:
	for every $\phi\in C^{2}(\overline{\Omega})$ such that $\phi$ touches $u_{*}$ at $x\in\Omega$ strictly from below, that is, $u_{*}-\phi$ has a strict minimum at $x$ with $u_{*}(x)=\phi (x)$, we have 
	\begin{equation*}
		\li \phi(x)\geq 0.
	\end{equation*}
	In this case, we write $\li u\geq0$ in the viscosity sense.

	Conversely, $u$ is called a $\li$-viscosity subsolution if its upper semicontinuous envelope function, $u^{*}$, satisfies that for every $\phi\in C^{2}(\overline{\Omega})$ such that $\phi$ touches $u^{*}$ at $x\in\Omega$ strictly from above, that is, $\phi-u^{*}$ has a strict minimum at $x$ with $u^{*}(x)=\phi (x)$, we have 
	\begin{equation*}
		\li \phi(x)\leq 0
	\end{equation*}
	and we write $\li u\leq 0$ in the viscosity sense. 
	
	Finally, $u$ is a $\li$-viscosity solution if it is both a $\li$-viscosity supersolution and a $\li$-viscosity subsolution and we denote $\li u=0$ in the viscosity sense.
\end{definition}

Now, let us introduce three results concerning the limit of non-decreasing sequences of viscosity sub or supersolutions. Although these results are not difficult to prove, they will be crucial for the proof of our main Theorem \eqref{v.maintheorem} in this section.

\begin{proposition}\label{propv2}
	Let be $\{v_{n}\}_{n=0}^{\infty}\subset C(\overline{\Omega})$ a non-decreasing sequence of functions, continuous up to the boundary such that $v_{n}$ is a $\li$-viscosity subsolution for each $n\in\mathbb{N}$ and $v_{n}\longrightarrow \vi$ pointwise. Then, $\vi$ is a $\li$-viscosity subsolution.
\end{proposition}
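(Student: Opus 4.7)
The statement is a standard stability-under-monotone-limit fact for viscosity solutions, so my plan is to carry out the classical half-relaxed limits argument, exploiting the monotonicity to pin down the relaxed upper limit precisely. First, I would record the identification
\[
v_\infty^{*}(x) \;=\; \limsup_{\substack{n\to\infty \\ y\to x}} v_n(y),
\]
which, because $v_n$ is non-decreasing and converges pointwise to $v_\infty$, simplifies to $v_\infty^{*}(x) = \limsup_{y\to x} v_\infty(y)$. This is the object on which the subsolution test must be checked according to the definition of $\li$-viscosity subsolution given above.

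Fix $\phi\in C^{2}(\overline{\Omega})$ touching $v_\infty^{*}$ strictly from above at some $x_0\in\Omega$, so $v_\infty^{*}-\phi$ has a strict local maximum equal to zero at $x_0$. Choose a small closed ball $\overline{B}_r(x_0)\subset\Omega$ and let $x_n\in \overline{B}_r(x_0)$ be a maximizer of $v_n-\phi$ on $\overline{B}_r(x_0)$; this exists by continuity of $v_n$. The key intermediate step is to show that, along a subsequence, $x_n\to x_0$ and $v_n(x_n)-\phi(x_n)\to 0$. For the upper bound, pick any $y\in B_r(x_0)$ with $v_\infty(y)$ close to $v_\infty^{*}(x_0)=\phi(x_0)$; then
\[
\limsup_{n\to\infty}\bigl(v_n(x_n)-\phi(x_n)\bigr) \;\geq\; \limsup_{n\to\infty}\bigl(v_n(y)-\phi(y)\bigr) \;=\; v_\infty(y)-\phi(y),
\]
which can be made arbitrarily close to $0$ from below. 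For the lower bound, extract a subsequence with $x_{n_k}\to x^{*}\in\overline{B}_r(x_0)$ and use $v_{n_k}(x_{n_k})\leq v_\infty(x_{n_k})$ together with upper semicontinuity of $v_\infty^{*}$ to conclude $\limsup_k v_{n_k}(x_{n_k})\leq v_\infty^{*}(x^{*})$. Combining the two bounds gives $\phi(x^{*})\leq v_\infty^{*}(x^{*})$; strictness of the touching at $x_0$ forces $x^{*}=x_0$, hence $x_{n_k}\to x_0$ and $v_{n_k}(x_{n_k})-\phi(x_{n_k})\to 0$.

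For $k$ large enough, $x_{n_k}$ lies in the open ball $B_r(x_0)$, so it is an interior local maximum of $v_{n_k}-\phi$. To apply the subsolution condition I replace $\phi$ by the strict perturbation $\phi(x)+|x-x_{n_k}|^{4}$, which touches $v_{n_k}$ from above strictly at $x_{n_k}$ and has the same first and second derivatives as $\phi$ there. Since $v_{n_k}$ is a $\li$-viscosity subsolution this yields
\[
F(D^{2}\phi(x_{n_k}),\nabla\phi(x_{n_k}))-h(x_{n_k}) \;=\; \li\phi(x_{n_k}) \;\leq\; 0.
\]
Passing to the limit $k\to\infty$ using continuity of $F$, continuity of $h$, and $\phi\in C^{2}$ gives $\li\phi(x_0)\leq 0$, as required.

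The main obstacle is really just the point $x^{*}=x_0$: one has to be careful that the maximizers $x_n$ of $v_n-\phi$ do not escape to the boundary of $\overline{B}_r(x_0)$ or cluster at a spurious point. This is handled cleanly by strict touching together with the observation $v_n\leq v_\infty\leq v_\infty^{*}$, which is the specific place where monotonicity of the sequence enters to force the relaxed upper limit to coincide with $v_\infty^{*}$ and avoid the usual complications of stability results.
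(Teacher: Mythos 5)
Your proof is correct and follows essentially the same strategy as the paper: both arguments hinge on the chain $v_n \le v_\infty \le v_\infty^{*}$ coming from monotonicity, use it to produce near-maximizers of $v_n - \phi$ that are forced toward the touching point $x_0$ by the strictness of the maximum, invoke the subsolution test at those interior points, and conclude by continuity of $\li$. The only difference is organizational: the paper runs a nested limit (fix $r$, find $n_0(r)$, then send $r\to 0^+$), whereas you extract a single convergent subsequence via compactness of the closed ball; the small cosmetic point of adjusting the test function by an additive constant so the touching values agree, as the paper's definition requires, is the only detail left implicit.
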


\begin{proof}
	Let be $\phi\in C^{2}(\overline{\Omega})$ such that $\phi-\vi^{*}$ has a strict minimum at $x_{0}\in\Omega$ and $\phi(x_{0})=\vi^{*}(\xo)$.
	
	Since $\vi^{*}$ is upper semicontinuous, $\phi-\vi^{*}$ is lower semicontinuous. Then, fixed $r>0$ small enough, $\phi-\vi^{*}$ reaches a minimun in $D_{r}:=\overline{\Omega}\setminus B_{r}(\xo)$. Say in $z_{\infty}\in D_{r}$. Thanks to $- \vi^{*}=(-\vi)_{*}$ and that the sequence $\{v_{n}\}$ is non-decreasing, we have
	\begin{equation}
		\label{vi1}0<(\phi-\vi^{*})(z_{\infty})\leq (\phi-\vi^{*})(z)\leq (\phi-\vi)(z)\leq (\phi-v_{n})(z)
	\end{equation}
	for each $z\in D_{r}$ and for each $n\in\mathbb{N}$.
	
	On the other hand, by the definition of pointwise value of the lower semicontinuous envelope,
	\begin{equation*}
	\begin{array}{l}
	\displaystyle
		0=(\phi-\vi^{*})(\xo)=(\phi+(-\vi)_{*})(\xo) \\[10pt]
		\qquad \displaystyle
=\inf_{\Big \{ \{x_{k}\}_{k=0}^{\infty}\subset \Omega: \;\;x_{k}\rightarrow_{k\to\infty} \xo\Big\}}\lim\limits_{k\to\infty}(\phi-\vi)(x_{k}).
\end{array}
	\end{equation*}
	Then, given $\ep>0$ there exists a sequence $\{y_{k}\}$ within $B_{r/2}(\xo)$ such that
	$$\lim_{k\to\infty}|(\phi-\vi)(y_{k})|\leq \ep/3.$$ Thus, there is $k_{0}\in\mathbb{N}$ such that $|(\phi-\vi)(y_{k_{0}})|\leq 2\ep/3$. Since $v_{n}$ converges to $\vi$ pointwise, there exists $n_{0}\in \mathbb{N}$ such that $|(\phi-\vi)(y_{k_{0}})-(\phi-v_{n})(y_{k_{0}})|\leq \ep/3$ for all $n\geq n_{0}$.
	
	In short, for each $\ep>0$ there exists a $y\in B_{r/2}(\xo)$ and $n_{0}\in \mathbb{N}$ such that 
	\begin{equation*}
		(\phi-v_{n})(y)\leq \ep\;\;\textrm{for all}\;\;n\geq n_{0}.
	\end{equation*}
	Since, $\phi-v_{n}$ is a continuous function, given $\ep<(\phi-\vi^{*})(z_{\infty})$ one can see that the minimum of $\phi-v_{n}$ in $\overline{\Omega}$ is reached at some $x_{n}^{r}\in B_{r/2}(\xo)$ for all $n\geq n_{0}$ thanks to the above inequality and \eqref{vi1}. Note that $n_{0}$ depends on $r$, so we write $n_{0}=n_{0}(r)$. We have $n_{0}(r)\to \infty$ and $x_{n}^{r}\to \xo$ as $r\to0^{+}$. 
	Due to the fact that $v_{n}$ is a $\li$- viscosity subsolution, we have
	\begin{equation*}
		\li\phi(x_{n}^{r})\leq 0.
	\end{equation*}

	Since $\li$ is a continuous operator we have
	\begin{equation*}
		0\geq \lim\limits_{r\to\infty}(\li\phi)(x_{n}^{r})=(\li \phi)\big(\lim\limits_{r\to\infty} x_{n}^{r}\big)=(\li \phi)(\xo).
	\end{equation*}
	Therefore $\vi$ is a $\li$-viscosity subsolution.
\end{proof}

\begin{lemma}\label{remark3}
	Let $\{u_{n}\}\subset C(\overline{\Omega})$ a non-decreasing sequence of continuous up to the boundary functions. If $u_{n}$ converge pointwise to some function $\ui$, then $\ui$ is a lower semicontinuous function  in $\overline{\Omega}$.
\end{lemma}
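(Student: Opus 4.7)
The strategy is to verify the $\varepsilon$--$\delta$ definition of lower semicontinuity directly, exploiting two facts: monotonicity gives $u_{\infty}(x)=\sup_{n}u_{n}(x)$ for every $x\in\overline{\Omega}$, and each $u_{n}$ is continuous so it can control values of $u_{\infty}$ from below in a neighborhood.

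Fix $x_{0}\in\overline{\Omega}$ and $\varepsilon>0$. The first step is to use the definition of the supremum (equivalently, pointwise convergence of the non-decreasing sequence) to pick $n_{0}\in\mathbb{N}$ with
\begin{equation*}
u_{n_{0}}(x_{0})\,\geq\, u_{\infty}(x_{0})-\tfrac{\varepsilon}{2}.
\end{equation*}
The second step is to use that $u_{n_{0}}\in C(\overline{\Omega})$ to choose $\delta>0$ such that
\begin{equation*}
|u_{n_{0}}(y)-u_{n_{0}}(x_{0})|\,<\,\tfrac{\varepsilon}{2} \qquad \textrm{for all }y\in B_{\delta}(x_{0})\cap\overline{\Omega}.
\end{equation*}
The third step is to combine these with the inequality $u_{\infty}\geq u_{n_{0}}$ (which holds pointwise by monotonicity) to conclude, for every $y\in B_{\delta}(x_{0})\cap\overline{\Omega}$,
\begin{equation*}
u_{\infty}(y)\,\geq\, u_{n_{0}}(y)\,>\, u_{n_{0}}(x_{0})-\tfrac{\varepsilon}{2}\,\geq\, u_{\infty}(x_{0})-\varepsilon,
\end{equation*}
i.e.\ $u_{\infty}(x_{0})\leq u_{\infty}(y)+\varepsilon$, which is exactly the required semicontinuity at $x_{0}$.

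There is no real obstacle here; the argument is the classical fact that a pointwise supremum of continuous functions is lower semicontinuous. The only minor care needed is to make sure the $\varepsilon/2$ splitting is set up before invoking continuity of $u_{n_{0}}$, and that the conclusion is stated on $\overline{\Omega}$ (not only on $\Omega$), which is fine since each $u_{n}$ is assumed continuous up to the boundary.
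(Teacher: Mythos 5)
Your proof is correct and follows essentially the same route as the paper: pick $n_{0}$ via pointwise convergence and monotonicity, use continuity of $u_{n_{0}}$ to get a $\delta$, and combine via $u_{\infty}\geq u_{n_{0}}$. Your version is slightly cleaner in that the $\varepsilon/2$ splitting yields the bound with exactly $\varepsilon$, and you explicitly take $x_{0}\in\overline{\Omega}$ rather than $\Omega$, matching the statement precisely.
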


\begin{proof}
	Let be $\xo\in\Omega$. Since $u_{n}\longrightarrow\ui$ pointwise, we have that for each $\ep>0$ there exists a $n_{0}\in\mathbb{N}$ such that $\ui(\xo)\leq u_{n_{0}}(\xo)+\ep.$ On the other hand, due to the fact that $u_{n_{0}}$ is continuous there exist a $\delta$ such that $u_{n_{0}}(\xo)\leq u_{n_{0}}(y)+\ep$ for all $y\in B_{\delta}(\xo)$. Moreover, thanks to the fact that the sequence $\{u_{n}\}$ is non-decreasing we get that, for every $\ep>0$ there exists $\delta>0$ such that 
	\begin{equation*}
		\ui(\xo)\leq \ui(y)+\ep\;\;\textrm{for all}\;\; y\in B_{\delta}(\xo).
	\end{equation*}
	The proof is finished.
\end{proof}

\begin{proposition}\label{propv3}
	Let $\{u_{n}\}_{n=0}^{\infty}\subset C(\overline{\Omega})$ be a non-decreasing sequence of continuous up to the boundary functions such that $u_{n}$ is a $\li$-viscosity supersolution for each $n\in\mathbb{N}$ and $u_{n}\longrightarrow \ui$ pointwise. Then, $\ui$ is a $\li$-viscosity supersolution. 
\end{proposition}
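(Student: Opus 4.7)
The plan is to mirror the argument of Proposition \ref{propv2}, but adapted to the supersolution side. A first simplification is that, by Lemma \ref{remark3}, the monotone pointwise limit $\ui$ is already lower semicontinuous on $\overline{\Omega}$, so $(\ui)_{*} = \ui$ and the supersolution condition can be checked directly for $\ui$ itself, without passing to a semicontinuous envelope.

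Fix $\phi \in C^{2}(\overline{\Omega})$ such that $\ui - \phi$ has a strict minimum at some $\xo \in \Omega$ with $\ui(\xo) = \phi(\xo)$. Choose $r>0$ small enough that $\overline{B_{r}(\xo)} \subset \Omega$, and for each $n$ define
\begin{equation*}
m_{n} := \min_{\overline{B_{r}(\xo)}} (u_{n} - \phi) = (u_{n} - \phi)(x_{n}),
\end{equation*}
where $x_{n} \in \overline{B_{r}(\xo)}$ is a minimum point (it exists by continuity of $u_{n}$). The shifted test function $\phi + m_{n}$ then touches $u_{n}$ from below at $x_{n}$; once we guarantee that $x_{n}$ lies in the open ball $B_{r}(\xo)$, the supersolution property of $u_{n}$, together with the fact that adding a constant does not alter the arguments of $F$, yields $\li \phi(x_{n}) = F(D^{2}\phi(x_{n}), \nabla\phi(x_{n})) - h(x_{n}) \geq 0$.

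The main obstacle is to show that $x_{n} \to \xo$. From $m_{n} \leq (u_{n} - \phi)(\xo)$ and the monotone pointwise convergence $u_{n}(\xo) \to \ui(\xo) = \phi(\xo)$ we get $\limsup_{n} m_{n} \leq 0$. Conversely, if $x_{n_{j}} \to \bar{x} \in \overline{B_{r}(\xo)}$ along a subsequence, monotonicity yields $u_{n_{j}}(x_{n_{j}}) \geq u_{k}(x_{n_{j}})$ for every fixed $k \leq n_{j}$; passing to $j \to \infty$ and using continuity of $u_{k}$ gives $\liminf_{j} m_{n_{j}} \geq u_{k}(\bar{x}) - \phi(\bar{x})$, and then sending $k \to \infty$ produces $\liminf_{j} m_{n_{j}} \geq \ui(\bar{x}) - \phi(\bar{x}) \geq 0$. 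Hence $\ui(\bar{x}) = \phi(\bar{x})$, and the strict-minimum property forces $\bar{x} = \xo$; since every subsequential limit equals $\xo$, the entire sequence satisfies $x_{n} \to \xo$. Thus $x_{n} \in B_{r}(\xo)$ for all $n$ large enough, and passing to the limit in $\li\phi(x_{n}) \geq 0$ via continuity of $F$ and $h$ gives $\li\phi(\xo) \geq 0$, completing the proof that $\ui$ is a $\li$-viscosity supersolution.
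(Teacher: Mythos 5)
Your argument is correct, and it takes a genuinely different and, in fact, considerably shorter route than the paper's. Both proofs begin by invoking Lemma~\ref{remark3} to dispense with the semicontinuous envelope and then localize around the touching point $\xo$, but the mechanics of how the touching points $x_n$ are located differ. The paper tracks minimizers $\zi$ and $z_n$ of $\ui - \phi$ and $u_n - \phi$ on the \emph{outer} region $\overline{\Omega}\setminus B_r(\xo)$, and the bulk of their proof is a separate ``claim'' that $(u_n - \phi)(z_n) \to (\ui-\phi)(\zi)$, which they establish by a lengthy contradiction argument exploiting the intermediate value theorem on the connected annulus; this claim is what lets them conclude the global minimum of $u_n - \phi$ eventually falls inside $B_r(\xo)$, and they then send $r \to 0^+$ to recover $\xo$. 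You instead track the minimizers $x_n$ of $u_n-\phi$ over the \emph{closed ball} $\overline{B_r(\xo)}$ directly and sandwich $m_n=(u_n-\phi)(x_n)$: the upper bound $\limsup m_n\le 0$ comes from evaluating at $\xo$, and the lower bound $\liminf m_{n_j} \geq \ui(\bar x)-\phi(\bar x) \geq 0$ along any convergent subsequence $x_{n_j}\to\bar x$ comes from monotonicity and continuity of each fixed $u_k$. This forces $\ui(\bar x)=\phi(\bar x)$, hence $\bar x = \xo$ by strictness of the minimum, hence $x_n\to\xo$ outright, and you can pass $n\to\infty$ at fixed $r$ without the auxiliary claim or the $r\to 0^+$ limit. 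The one small technical point you elide, as does the paper, is that $x_n$ need only be a local (not necessarily strict or global) minimum of $u_n-(\phi+m_n)$, whereas the stated definition asks for a strict touching; this is a standard and harmless equivalence in viscosity theory (perturb $\phi$ by a small quadratic). Your version is both correct and cleaner, essentially collapsing the paper's separate claim into a two-sided estimate on $m_n$.
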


\begin{proof}
	By Lemma \ref{remark3} we know that $\ui$ is a lower semicontinuous function. To prove that $\ui$ is a $\li$-viscosity supersolution let us consider $\phi\in C^{2}(\overline{\Omega})$ such that $\ui-\phi$ has a strict minimum at $\xo\in\Omega$ such that $(\uip)(\xo)=0$. We want to obtain that $(\li\phi)(\xo)\geq 0$.
	
	Let $r>0$ be a fixed small radius. Since $\uip$ is lower semicontinuous, these functions reach a positive minimum in $\overline{\Omega}\setminus B_{r}(\xo)$, say in $\zi\in \overline{\Omega}\setminus B_{r}(\xo)$, then, we have
	\begin{equation}
		0<(\uip)(\zi)\leq (\uip)(x)\;\;\textrm{for all}\;\;x\in\overline{\Omega}\setminus B_{r}(\xo).
	\end{equation}
	Also, since $\unp$ is continuous, there exists $z_{n}\in\overline{\Omega}\setminus B_{r}(\xo)$ where $\unp$ reaches a minimum in $\overline{\Omega}\setminus B_{r}(\xo)$. We claim that $(\unp)(z_{n})$ converges to $(\uip)(\zi)$ as $n$ to infinity. Let us continue
	the proof assuming this fact and prove it after finishing our argument.
	
	The claim implies that there exists $n_{0}=n_{0}(r)$ such that 
	\begin{equation}
		\label{vi2} 0<\frac{1}{2}(\uip)(\zi)\leq (\unp)(z_{n})\leq (\unp)(x)
	\end{equation}
	for all $x\in\overline{\Omega}\setminus B_{r}(\xo)$
	and $n\geq n_{0}$.
	
	On the other hand, since $u_{n}$ converges to $\ui$ pointwise and \mbox{$(\uip)(\xo)=0$,} there exists $n_{1}=n_{1}(r)$ such that 
	\begin{equation}
		(\unp)(\xo)\leq\frac{1}{2}(\uip)(\zi)\;\;\textrm{for all}\;n\geq n_{1}.
	\end{equation}
	Thus, the above and \eqref{vi2} imply $\unp$ reaches its minimum in $\overline{\Omega}$ in the interior of  the ball $B_{r}(\xo)$ for all $n\geq n_{3}=\max\{n_{0},n_{1}\}$. Call this point $x_{n}^{r}$. Then, since $u_{n}$ is a $\li$-viscosity supersolution, we get
	\begin{equation*}
		(\li\phi)(x_{n}^{r})\geq 0\;\;\textrm{for all}\;\;n\geq n_{3}(r)\;\;\textrm{with}\;\;x_{n}^{r}\in B_{r}(\xo).
	\end{equation*}
	Due to the fact that $x_{n}^{r}\to \xo$ as $r\to 0^{+}$, we have
	\begin{equation*}
		(\li\phi)(\xo)=\lim\limits_{r\to0^{+}}(\li\phi)(x_{n}^{r})\geq 0.
	\end{equation*}
	Therefore, $\ui$ is a $\li$-viscosity supersolution in $\Omega$.

	Now, we will prove the claim. Let us define  $w_{\infty}=\uip$, $w_{n}=\unp$ and $a_{n}=w_{n}(z_{n})$. Thanks to $\{u_{n}\}$ is a pointwise increasing sequence, $\{a_{n}\}$ is also an increasing sequence,
	\begin{equation*}
		a_{n}=w_{n}(z_{n})\leq w_{n}(z_{n+1})\leq w_{n+1}(z_{n+1})=a_{n+1}.
	\end{equation*}
	Moreover, the sequence is bounded above by $w_{\infty}(\zi)$ due to the fact that 
	$u_{n}$ converges pointwise to $\ui$. Hence,
	\begin{equation*}
		a_{n}\leq w_{n}(z_{n})\leq w_{n}(\zi)\leq w_{\infty}(\zi).
	\end{equation*}
	Then, there exists a number $a_{\infty}\leq w_{\infty}(\zi)$ such that $a_{n}$ converges to $a_{\infty}$ as $n$ tends to infinity.
	
	Suppose that $a_{\infty}<w_{\infty}(\zi)$. Then, because $\{z_{n}\}$ is a colection of points in a compact set, there exists a subsequence $\{z_{n_{j}}\}\subset\{z_{n}\}$ and a point $\widetilde{z}\in\overline{\Omega}\setminus B_{r}(\xo)$  sucht that 
	\begin{equation*}
		z_{n_{j}}\longrightarrow\widetilde{z}\;\;\textrm{as}\;\;n_{j}\to\infty.
	\end{equation*}
	Since $w_{n}$ is a lower semicontinuous function, for each $\ep>0$, there exists a $\delta=\delta(\ep,\widetilde{z})$ such that
	\begin{equation*}
		w_{\infty}(\widetilde{z})\leq w_{\infty}(y)+\ep\;\;\textrm{for all}\;\;y\in B_{\delta}(\widetilde{z}).
	\end{equation*}
	Due to the fact that $z_{n_{j}}$ converges to $\widetilde{z}$, we can suppose $z_{n_{j}}\in B_{\delta}(\widetilde{z})$ for all $n_{j}$. Then 
	\begin{equation*}
		w_{\infty}(\widetilde{z})\leq w_{\infty}(z_{n_{j}})+\ep\;\;\textrm{for all}\;\;n_{j}.
	\end{equation*}
	For each $n_{j}$  there exists a $m_{j}\in \mathbb{N}$ such that 
	\begin{equation*}
		w_{\infty}(z_{n_{j}})\leq w_{k}(z_{n_{j}})+\ep\;\;\textrm{for all}\;\;k\geq m_{j}.
	\end{equation*}
	Observe that we can construct a sequence $\{(n_{j},k_{j})\}$ where $k_{j}\geq m_{j}$ for all $j\in\mathbb{N}$ and $(n_{j},k_{j})\longrightarrow(\infty,\infty)$ as $j\to\infty$  such that 
	\begin{equation*}
		w_{\infty}(z_{n_{j}})\leq w_{k_{j}}(z_{n_{j}})+\ep\;\;\textrm{for all}\;\;j\in\mathbb{N}.
	\end{equation*}
	
	Since $a_{\infty}<w_{\infty}(\zi)$ there exists $\ep_{0}>0$ such that 
	\begin{equation*}
		a_{\infty}+2\ep_{0}<w_{\infty}(\zi).
	\end{equation*}
	For that $\ep_{0}$ the above argument implies that there exists a sequence $\{(n_{j},k_{j})\}$ that goes to $(\infty,\infty)$ as $j\to\infty$ such that 
	\begin{equation*}
		a_{\infty}+2\ep_{0}<w_{\infty}(\zi)\leq  w_{k_{j}}(z_{n_{j}})+2\ep_{0}\;\;\textrm{for all}\;\;j\in\mathbb{N}
	\end{equation*}
	and therefore,
	\begin{equation*}
		a_{\infty}< w_{k_{j}}(z_{n_{j}})\;\;\textrm{for all}\;\;j\in\mathbb{N}.
	\end{equation*}
	On the other hand, since $\{a_{n}\}$ is a increasing sequence and $a_{k_{j}}=w_{k_{j}}(z_{k_{j}})$ we have that
	\begin{equation*}
		w_{k_{j}}(z_{k_{j}})\leq a_{\infty}< w_{k_{j}}(z_{n_{j}})\;\;\textrm{for all}\;\;j\in\mathbb{N}.
	\end{equation*}
	Thanks to the fact that for each $j\in\mathbb{N}$ the functions $w_{k_{j}}$ are continuous, $\overline{\Omega}\setminus B_{r}(\xo)$ is connected  and $z_{n_{j}}$ and $z_{k_{j}}$ are in $\overline{\Omega}\setminus B_{r}(\xo)$, there exists $y_{j}\in\overline{\Omega}\setminus B_{r}(\xo)$ such that 
	\begin{equation*}
		w_{k_{j}}(y_{j})=a_{\infty}.
	\end{equation*}
	Thus, there is a subsequence $\{y_{j_{i}}\}\subset\{y_{j}\}$ and a point $y\in \overline{\Omega}\setminus B_{r}(\xo)$ such that $y_{j_{i}}\to y$ as $i\to\infty$. Due to the fact that $w_{\infty}$ is a lower semicontinuous function we have
	\begin{equation*}
		w_{\infty}(y)\leq \lim\limits_{y_{j_{i}}\to y}w_{k_{j_{i}}}(y_{j_{i}})=a_{\infty}.
	\end{equation*}
	However, this is a contradiction because $y\in\overline{\Omega}\setminus B_{r}(\xo)$ and that implies
	\begin{equation*}
		a_{\infty}<w_{\infty}(\zi)\leq w_{\infty}(y).
	\end{equation*}
	Therefore, we have the desired equality $a_{\infty}=w_{\infty}(\zi)$.
	\end{proof}
	
	Now, we are ready to introduce the definition of a solution to the obstacle problem 
	in the viscosity sense.

\begin{definition} \label{def3.4}
	Given $\varphi:\overline{\Omega}\rightarrow\RR$ and $f\in C(\partial\Omega)$ such that $f\geq\varphi^{*}$ in $\partial\Omega$.
	We say that $u:\Omega\longrightarrow\RR$ is a viscosity solution of the $\li$-lower obstacle problem with obstacle $\varphi$ and boundary datum $f$ and we denote $u=\ool(\li,\varphi,f)$ if $u$ satisfies:
	\begin{equation*}
		\begin{cases}
			u_{*}\geq \varphi^{*}&\;\;\textrm{in}\;\;\Omega,\\
			u_{*}=f&\;\;\textrm{in}\;\;\partial\Omega,\\
			\li u\geq 0&\;\;\textrm{in}\;\;\Omega\;\;\textrm{in the viscosity sense,}\\
			\li u=0&\;\;\textrm{in}\;\;\{ u_{*}>\varphi^{*}\}\;\;\textrm{in  the viscosity sense.}
		\end{cases}
	\end{equation*}
	
	Whereas, given $\psi$ and $g$ such that $f\leq\psi_{*}$ on $\partial\Omega$, we say that $v$ is a viscosity solution of the $\li$-upper obstacle problem with obstacle $\psi$ and boundary datum $g$, and we denote $v=\oou(\li,\psi,g)$, if $v$ satisfies:
	\begin{equation*}
		\begin{cases}
			v^{*}\leq \psi_{*}&\;\;\textrm{in}\;\;\Omega,\\
			v^{*}=g&\;\;\textrm{in}\;\;\partial\Omega,\\
			\li v\leq 0&\;\;\textrm{in}\;\;\Omega\;\;\textrm{in the viscosity sense,}\\
			\li v=0&\;\;\textrm{in}\;\;\{v^{*}<\psi_{*}\}\;\;\textrm{in the viscosity sense.}
		\end{cases}
	\end{equation*}
\end{definition}

\begin{remark}
	Note that, when $u$ is a viscosity solution of the $\li$-lower obstacle problem with obstacle $\varphi$ and boundary datum $f$, then, its lower and upper semicontinuous envelopes, $u_{*}$ and $u^{*}$, are also viscosity solutions for the same problem. 
	
	The same applies to the upper obstacle, if $v=\oou(\li,\psi,g)$, then, $v_{*}=\oou(\li,\psi,g)$ and $v^{*}=\oou(\li,\psi,g)$.

\end{remark}

However, we have uniqueness of solutions to the obstacle problem 
up to semicontinuous envelopes.

\begin{lemma}
	\label{unique}
	Given $\varphi$ and $f$ defined as before
	\begin{itemize}
		\item[(a)] if $f\geq\varphi^{*}$ in $\partial\Omega$, there exists at most one lower semicontinuous function $u$ such that $u=\ool(\li,\varphi,f)$.
		
		\medskip
		
		\item[(b)] 
		if $f\leq\varphi^{*}$ in $\partial\Omega$, there exists at most one upper semicontinuous function $u$ such that $u=\oou(\li,\varphi,f)$.
	\end{itemize}
\end{lemma}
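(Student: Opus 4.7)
The plan is to deduce both statements from a Perron-type characterization of the obstacle problem. For part (a), the key claim is that any LSC solution $u=\ool(\li,\varphi,f)$ coincides with the pointwise infimum of the admissible class
\begin{equation*}
\mathcal{W} = \Big\{w \text{ LSC on } \overline{\Omega} : \li w \geq 0 \text{ in the viscosity sense},\ w \geq \varphi^* \text{ in } \Omega,\ w = f \text{ on } \partial\Omega \Big\}.
\end{equation*}
Granting this, given two LSC solutions $u_1,u_2$, each belongs to $\mathcal{W}$ (an obstacle-problem solution is in particular a global supersolution lying above the obstacle with the correct boundary value), and each equals $\inf\mathcal{W}$, so $u_1=u_2$. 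Part (b) follows symmetrically from the dual characterization of $\oou(\li,\varphi,f)$ as the supremum of the analogous class of USC subsolutions lying below the obstacle.

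The content of the Perron characterization is the comparison statement: for any $w\in\mathcal{W}$ and any LSC solution $u$ one has $u\leq w$. I would prove this by the standard doubling of variables. Assume for contradiction $M:=\sup_{\overline{\Omega}}(u^*-w)>0$ and consider
\begin{equation*}
\Phi_\ep(x,y) = u^*(x)-w(y)-\frac{|x-y|^2}{2\ep},
\end{equation*}
which is USC on $\overline{\Omega}\times\overline{\Omega}$, since $u^*$ is USC and $-w$ is USC. Let $(x_\ep,y_\ep)$ be a maximizer. The standard penalization estimates give $|x_\ep-y_\ep|^2/\ep\to 0$ and both sequences converging to a common point $\bar{x}\in\overline{\Omega}$ with $u^*(\bar{x})-w(\bar{x})=M$. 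The continuous boundary attainment built into the standing hypothesis on $\li$ forces $u=w=f$ on $\partial\Omega$ (and thus $u^*=f$ there as well), which in turn forces $\bar{x}\in\Omega$.

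From here one splits according to whether the obstacle is active at $\bar{x}$. If $u^*(\bar{x})\leq \varphi^*(\bar{x})$, then $u^*(\bar{x})\leq \varphi^*(\bar{x})\leq w(\bar{x})$, contradicting $M>0$. Otherwise $u^*(\bar{x})>\varphi^*(\bar{x})$, and for $\ep$ small the inequality $u^*(x_\ep)>\varphi^*(x_\ep)$ persists (by USC of $u^*-\varphi^*$ toward $\bar{x}$); the obstacle equation then yields $\li u^* \leq 0$ at $x_\ep$ in the viscosity sense while $\li w \geq 0$ at $y_\ep$. The Crandall--Ishii lemma, combined with the assumed comparison principle for $\li$, produces the usual contradiction as $\ep\to 0$. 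The main technical hurdle is the boundary step, namely matching the envelope $u^*$ to $f$ on $\partial\Omega$: this is exactly where the barrier part of the standing hypothesis is needed. Once that is in hand, the remainder follows the standard viscosity template for obstacle problems, and the symmetric argument takes care of part (b).
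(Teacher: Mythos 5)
The paper's proof is far more direct than your route: given two LSC solutions $u_1,u_2$, one splits the domain into the contact set $\{u_1=\varphi^*\}$, where trivially $u_2\geq\varphi^*=u_1$, and the open non-contact set $\{u_1>\varphi^*\}$, where $u_1$ is a solution, $u_2$ is a supersolution, and $u_2\geq u_1$ on the relative boundary; the standing comparison-principle hypothesis then gives $u_2\geq u_1$ there, and symmetrizing finishes. Your Perron strategy is in the spirit of the paper's later Lemma \ref{propv1} (which is proved by the same contact/non-contact decomposition), so proving that characterization first and deducing uniqueness from it is a legitimate alternative organization.

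However, your doubling-of-variables argument for the key comparison has a genuine gap. In the case $u^*(\bar x)>\varphi^*(\bar x)$ you deduce $u^*(x_\ep)>\varphi^*(x_\ep)$ and then invoke the equation $\li u=0$ at $x_\ep$, but by Definition \ref{def3.4} the equation is only asserted on the open set $\{u_*>\varphi^*\}$. For an LSC $u$ one has $u=u_*\leq u^*$, and the inequality $u^*(x_\ep)>\varphi^*(x_\ep)$ does not force $u_*(x_\ep)>\varphi^*(x_\ep)$: one can have $u^*(x_\ep)>\varphi^*(x_\ep)\geq u_*(x_\ep)$, in which case $x_\ep$ lies in the contact set and no subsolution test is available there. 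You would need to argue $u_*(x_\ep)>\varphi^*(x_\ep)$, which is not implied by what you establish. There is also a conceptual wobble in ``the Crandall--Ishii lemma, combined with the assumed comparison principle, produces the usual contradiction'': the paper takes the comparison principle as a black-box hypothesis and does not posit the structural conditions on $F$ needed for the Jensen--Ishii machinery, so either you add those hypotheses and prove comparison from scratch, or (much simpler, and what the paper does) you localize to the open non-contact set and apply the assumed comparison principle directly, with no penalization at all.
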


\begin{proof} We will only prove item (a), the other item is analogous. 

	(a) Suppose that there exists two lower semicontinuous functions solutions $u_1$ and $u_2$, let us prove that $u_2\geq u_1$. 
	
	In the set $\{u_1=\varphi^{*}\}$ we have $u_2\geq \varphi^{*}=u_1$.
	In the open set $\{u_1>\varphi^{*}\}$ we have that $\li u_2\geq 0$ and $\li u_1=0$ and $u_2\geq u_1$ in $\partial\{u_1>\varphi^{*}\}$. Then, by the Comparison Principle we obtain that $u_2\geq u_1$ in $\{u_1>\varphi^{*}\}$.
	Thus, we conclude that $$u_2\geq u_1\;\;\textrm{in}\;\;\Omega.$$
	
	The reverse inequality follows interchanging the roles of $u_1$ and $u_2$.
\end{proof}

In view of the previous lemma, from now we will suppose that $u=\ool(\li,\varphi,f)$ is lower semicontinuous and $v=\oou(\li,\varphi,f)$ is upper semicontinuous. 

Next, we show that when the obstacles and the boundary data are ordered then the
solutions to the obstacle problems are also ordered.

\begin{lemma}
	\label{comparison} \
	\begin{itemize}
		\item[(a)] Given $\varphi_1$ and $\varphi_2$ functions such that $\varphi_1\leq\varphi_2$ (a.e.) and $f_1, f_2\in C(\partial\Omega)$ such that $f_1\leq f_2$, if $f_1\geq\varphi_1^{*}$ and $f_2\geq\varphi_2^{*}$ on $\partial\Omega$, let us consider  $u_1=\ool(\li,\varphi_1,f)_1$ and $u_2=\ool(\li,\varphi_2,f_2)$, then $$u_1\leq u_2\;\;\textrm{in}\;\;\Omega.$$
		\item[(b)] Given $\psi_1$ and $\psi_2$ functions such that $\psi_1\leq\psi_2$ (a.e.) and $g_1, g_2\in C(\partial\Omega)$ such that $g_1\leq g_2$, if $g_1\leq\psi_{1{*}}$  and $g_2\leq\psi_{2{*}}$ on $\partial\Omega$, let us consider  $v_1=\oou(\li,\psi_1,g_1)$ and $v_2=\oou(\li,\psi_2,g_2)$, then $$v_1\leq v_2 \;\;\textrm{in}\;\;\Omega.$$
	\end{itemize}
\end{lemma}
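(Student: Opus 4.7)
The strategy for part (a) is the classical comparison argument for obstacle problems: split $\Omega$ into the coincidence set of $u_1$, where the inequality $u_1 \leq u_2$ is immediate, and the non-contact set, where $u_1$ solves the equation and we can invoke the comparison principle assumed in the Hypothesis.

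First I would note that $\varphi_1 \leq \varphi_2$ implies $\varphi_1^{*} \leq \varphi_2^{*}$ for the upper semicontinuous envelopes, directly from the definition of $g^{*}$ as the infimum of u.s.c.\ majorants. On the coincidence set $\{u_1 = \varphi_1^{*}\}$ the desired bound is immediate, since by the definitions of $u_1$ and $u_2$,
\[
u_1 = \varphi_1^{*} \leq \varphi_2^{*} \leq u_2.
\]

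Next, consider the set
\[
D := \{x \in \Omega : u_1(x) > \varphi_1^{*}(x)\},
\]
which is open because $u_1$ is lower semicontinuous and $\varphi_1^{*}$ is upper semicontinuous. By Definition \ref{def3.4}, $\li u_1 = 0$ in $D$ in the viscosity sense, so $u_1^{*}$ is a viscosity subsolution of $\li = 0$ in $D$, while $u_2$ (which we take l.s.c.) is a viscosity supersolution of $\li = 0$ in $D$, since $\li u_2 \geq 0$ holds throughout $\Omega$. To apply the comparison principle in $D$, I must verify that $u_1 \leq u_2$ on $\partial D$. On $\partial D \cap \partial \Omega$, this follows from $u_1 = f_1 \leq f_2 = u_2$. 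On $\partial D \cap \Omega$, a boundary point $x$ does not belong to the open set $D$, so $u_1(x) \leq \varphi_1^{*}(x)$; combined with $u_1 \geq \varphi_1^{*}$ this gives $u_1(x) = \varphi_1^{*}(x)$, and the coincidence-set step yields $u_1(x) \leq u_2(x)$. The comparison principle then provides $u_1^{*} \leq u_2$ in $D$, hence $u_1 \leq u_1^{*} \leq u_2$ in $D$, which combined with the bound on $\{u_1 = \varphi_1^{*}\}$ completes the proof of (a).

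Part (b) is fully analogous, with obstacle from above instead of below: one works on the open set $D' = \{v_2 < \psi_{2*}\}$, where $\li v_2 = 0$ holds in the viscosity sense and $v_1$ is a subsolution, and checks the ordering of $v_1 \leq v_2$ on $\partial D'$ by using $v_1 \leq \psi_{1*} \leq \psi_{2*} = v_2$ on interior boundary points and $g_1 \leq g_2$ on $\partial \Omega$. The only delicate point in either part is the careful bookkeeping between $u_i$, $u_i^{*}$, $\varphi_i$ and $\varphi_i^{*}$ so that the comparison principle is applied to an admissible u.s.c.\ subsolution/l.s.c.\ supersolution pair with the correct boundary ordering; once the right envelopes are put in place, the argument is a direct application of the assumed comparison principle and the defining properties of the obstacle problems.
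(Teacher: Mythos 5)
Your proposal is correct and follows essentially the same strategy as the paper: split $\Omega$ into the coincidence set $\{u_1=\varphi_1^*\}$, where the ordering of the obstacles gives the result, and the open non-contact set $\{u_1>\varphi_1^*\}$, where $u_1$ solves the equation, $u_2$ is a supersolution, and the comparison principle applies once the boundary ordering is checked. You are slightly more explicit than the paper about decomposing $\partial\{u_1>\varphi_1^*\}$ into $\partial\Omega$ and the interior part, and about tracking the semicontinuous envelopes, and you correctly identify the symmetric set $\{v_2<\psi_{2*}\}$ for part (b); these are presentational refinements of the same argument rather than a different route.
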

\begin{proof} We will prove item (a), the other case is analogous. 

	(a) We will consider two cases: 
	
	In the set $\{u_1=\varphi_1^{*}\}$ we have $u_2\geq \varphi_2^{*}\geq\varphi_1^{*}=u_1$.
	
	 In the open set $\{u_1>\varphi_1^{*}\}$ we have that $\li u_2\geq 0$ and $\li u_1=0$ and $u_2\geq u_1$ in $\partial\{u_1>\varphi_1^{*}\}$. Then, by the Comparison Principle we obtain that $u_2\geq u_1$ in $\{u_1>\varphi^{*}\}$. 
	 
	 Thus $u_2\geq u_1$ in $\Omega$.
\end{proof}
\begin{lemma}\label{propv1}
	Given $\varphi$ and $f$ as before, let us consider $u=\ool(\li,\varphi,f)$, then
	\begin{equation*}
	\begin{array}{l}
	\displaystyle 
		u=\min\Big\{w:\Omega\longrightarrow\RR\;\;\textrm{lower semicontinuous}:
		\\[10pt] \qquad \qquad \qquad \displaystyle\begin{aligned}
			&w\geq \varphi^{*}\;\;\textrm{in}\;\;\Omega,\;\;w\geq f\;\;\textrm{on}\;\;\partial\Omega,\\
			&\li w\geq 0\;\;\textrm{in}\;\;\Omega\;\;\;\textrm{in  the viscosity sense}
		\end{aligned}
		\Big \}.
		\end{array}
	\end{equation*}
	
	In the same way, if we take $v=\oou(\li,\varphi,f)$, then
	\begin{equation*}
	\begin{array}{l} \displaystyle
		v=\max\Big\{w:\Omega\longrightarrow\RR\;\;\textrm{upper semicontinuous}:\;\;
		\\[10pt] \qquad \qquad \qquad \displaystyle
		\begin{aligned}
			&w\leq \varphi_{*}\;\;\;\textrm{in}\;\;\Omega,\;\;w\leq f\;\;\textrm{on}\;\;\partial\Omega,\\
			&\li w\leq 0\;\;\textrm{in}\;\;\Omega\;\;\;\textrm{in the viscosity sense}
		\end{aligned}
		\Big \}.
		\end{array}
	\end{equation*}
\end{lemma}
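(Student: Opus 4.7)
The plan is to mirror the variational proof of Proposition \ref{prop1}, replacing the convex/minimization argument with the comparison principle for $\li$. I will focus on the lower obstacle statement, since the upper one is obtained by swapping the inequalities.

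First, I would verify that $u = \ool(\li,\varphi,f)$ belongs to the admissible set on the right-hand side. By Lemma \ref{unique} (and the convention adopted after it) we may take $u$ to be lower semicontinuous. The defining properties of $\ool$ give $u \geq \varphi^{*}$ in $\Omega$, $u = f$ on $\partial\Omega$ (so in particular $u \geq f$ there), and $\li u \geq 0$ in $\Omega$ in the viscosity sense. Hence $u$ is a candidate, and so $u \geq \min\{\cdots\}$.

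Second, and this is the heart of the argument, I would show that $u \leq w$ in $\Omega$ for every admissible $w$. Consider the open set $A := \{x \in \Omega : u(x) > \varphi^{*}(x)\}$, which is indeed open because $u - \varphi^{*}$ is the difference of a l.s.c.\ and a u.s.c.\ function, hence lower semicontinuous. On $A$ we have $\li u = 0$ by definition of $\ool$, while $\li w \geq 0$ by admissibility of $w$. On $\partial A$ one has either $u = \varphi^{*}$ (and then $w \geq \varphi^{*} = u$) or the point lies on $\partial \Omega$ (and then $w \geq f = u$); therefore $w \geq u$ on $\partial A$. The comparison principle for $\li$ (which is one of the standing hypotheses) yields $u \leq w$ in $A$. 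On the complement $\Omega \setminus A$ we have $u = \varphi^{*} \leq w$ directly. Combining the two regions, $u \leq w$ throughout $\Omega$, which gives $u \leq \min\{\cdots\}$ and closes the identity.

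The upper obstacle characterization follows by the symmetric argument: $v = \oou(\li,\varphi,f)$ is upper semicontinuous, satisfies $v \leq \varphi_{*}$, $v = f$ on $\partial\Omega$, and $\li v \leq 0$, so $v$ lies in the corresponding set; and for any admissible $w$ (u.s.c., $\leq \varphi_{*}$, $\leq f$ on $\partial\Omega$, $\li w \leq 0$) the comparison principle on the open set $\{v < \varphi_{*}\}$ with the same boundary trichotomy gives $w \leq v$.

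The only delicate point I anticipate is ensuring that the comparison principle is applicable on the open set $A$, which need not be regular and may meet $\partial \Omega$. This is handled by the fact that comparison is assumed for $\li$ on arbitrary subdomains (it is phrased as one of the standing hypotheses on $F$), and by using the semicontinuity of $u$ and $w$ to make sense of boundary values on $\partial A$ pointwise; everything else is a bookkeeping exercise that parallels the weak-solution argument in Proposition \ref{prop1}.
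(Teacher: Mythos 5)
Your proof is correct and follows essentially the same route as the paper's: show $u$ lies in the admissible family, then for any admissible $w$ apply the comparison principle on the open set $\{u>\varphi^{*}\}$ (where $\li u=0$, $\li w\geq 0$, and $w\geq u$ on the boundary) and use $u=\varphi^{*}\leq w$ on the complement. Your version is slightly more explicit about the two pieces of $\partial\{u>\varphi^{*}\}$ (touching $\partial\Omega$ versus the contact set) and about why that set is open, but the argument is the same.
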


\begin{proof}
	Let us prove the first claim. Consider
	\begin{equation*}
	\begin{array}{l}
	\displaystyle 
		\overline{u}=\min\Big\{w:\Omega\longrightarrow\RR\;\;\textrm{lower semicontinuous}:
		\\[10pt] \qquad \qquad \qquad \displaystyle
		\begin{aligned}
			&w\geq \varphi^{*}\;\;\textrm{in}\;\;\Omega,\;\;w\geq f\;\;\textrm{on}\;\;\partial\Omega,\\
			&\li w\geq 0\;\;\textrm{in}\;\;\Omega\;\;\;\textrm{in the viscosity sense}
		\end{aligned}
		\Big \}.
		\end{array}
	\end{equation*}
	Since $u\geq\varphi^{*}$ in $\Omega$, $u=f$ on $\partial\Omega$ and $\li u\geq 0$ in $\Omega$ in viscosity sense, we get $$\overline{u}\leq u.$$
	
	Now, let us consider 
	$$
	\begin{array}{l}
	\displaystyle 
	w\in\Big\{w:\Omega\longrightarrow\RR\;\;\textrm{lower semicontinuous}:
	\\[10pt] \qquad \qquad \qquad \displaystyle
	\begin{aligned}
		&w\geq \varphi^{*}\;\;\textrm{in}\;\;\Omega,\;\;w\geq f\;\;\textrm{on}\;\;\partial\Omega,\\
		&\li w\geq 0\;\;\textrm{in}\;\;\Omega\;\;\;\textrm{in the viscosity sense}
	\end{aligned}
	\Big \}.
	\end{array}
	$$
	In the set $\{u=\varphi^{*}\}$ we have $w\geq \varphi^{*}=u$.
	
	In the open set $\{u>\varphi^{*}\}$ we have that $\li w\geq 0$ and $\li u=0$ in the viscosity sense and $w\geq u$ in $\partial\{u>\varphi^{*}\}$. Then, by the Comparison Principle we obtain that $w\geq u$ in $\{u_1>\varphi^{*}\}$.
	
	Then, $u\leq w$ in $\Omega$. Taking minimum we get $$u\leq \overline{u}.$$
	
	The other case is analogous.
\end{proof}

Now, we introduce the definition of a solution to the two membranes problem in the 
viscosity sense (this is just Definition \ref{def1.1} with solutions understood 
in the  viscosity sense).


\begin{definition}\label{df2}
	Given $f,g\in C(\partial\Omega)$ with $f\geq g$, let $\lu$ and $\ld$ by two operators that satisfy the hypothesis at the begining of the section. We say a pair of functions $(u,v)$ is a solution of the two membranes problem with boundary data $(f,g)$ if 
	\begin{equation*}
		u=\ool(\lu, v, f) \;\;\textrm{and}\;\;v=\oou(\ld,u,g).
	\end{equation*}
\end{definition}

Our main result is the following.

\begin{theorem} \label{v.maintheorem}
	Given $f,g$ and $\lu,\ld$, let us consider $v_{0}\in C(\overline{\Omega})$ a $\ld$-viscosity subsolution such that $v_{0}\leq g$ in $\partial\Omega$ and 
	then define inductively the sequences
	\begin{equation*}
		u_{n}=\ool(\lu,v_{n},f ),\quad \mbox{ and } \quad 
		v_{n}=\oou( \ld,u_{n-1},g).
	\end{equation*}
	Both  sequences of functions $\{u_{n}\}_{n=0}^{\infty}\subset C(\overline{\Omega})$, $\{v_{n}\}_{n=0}^{\infty}\subset C(\overline{\Omega})$ converge to some limit functions $\ui$ and $\vi$, respectively. Moreover, theses limits are a solution of the two membranes problem with boundary data $(f,g)$, that is,  
	\begin{equation*}
		u_{\infty}=\ool(\lu,\vi,f ) \;\;\textrm{ and }\;\;
		v_{\infty}=\oou(\ld,\ui,g).
	\end{equation*}

In addition, the functions $u_{\infty}$ and $v_{\infty}$ are lower semicontinuous in $\bar{\Omega}$ and continuous in the interior of the set where $u_{\infty}$ touches $v_{\infty}$, i.e, in the interior of $\{ u_{\infty}=v_{\infty}\}$.
\end{theorem}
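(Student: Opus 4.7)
The plan is to mirror the strategy of Theorem \ref{prop2}, replacing energy minimization by the extremal sub/supersolution characterization given in Lemma \ref{propv1} and using the stability Propositions \ref{propv2} and \ref{propv3} already proved in this section. The standing hypothesis on the operators guarantees that each $u_n,v_n$ is continuous up to the boundary, so the iteration is well defined.

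First I would prove monotonicity by induction. For the base case, $v_0$ is an $\ld$-subsolution with $v_0\leq g$ on $\partial\Omega$ and with $v_0\leq u_0$ in $\Omega$ (from the obstacle inequality satisfied by $u_0$), so $v_0$ lies in the admissible class whose supremum defines $v_1$ via Lemma \ref{propv1}, giving $v_0\leq v_1$. Then Lemma \ref{comparison} applied to $u_0=\ool(\lu,v_0,f)$ and $u_1=\ool(\lu,v_1,f)$ yields $u_0\leq u_1$; the inductive step is identical. To obtain uniform bounds I would compare with $\hat v$ solving $\ld \hat v=0$ with $\hat v=g$ on $\partial\Omega$, so that the comparison principle forces $v_n\leq\hat v$ and then $u_n\leq \ool(\lu,\hat v,f)$ by Lemma \ref{comparison}. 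Monotonicity and boundedness produce pointwise limits $u_\infty,v_\infty$, which are lower semicontinuous on $\bar\Omega$ by Lemma \ref{remark3} and satisfy $u_\infty=f$, $v_\infty=g$ on $\partial\Omega$. Propositions \ref{propv3} and \ref{propv2} then give that $u_\infty$ is an $\lu$-supersolution and $v_\infty$ a $\ld$-subsolution.

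The central step is the identification $u_\infty=\ool(\lu,v_\infty,f)$, for which I use Lemma \ref{propv1}. For any lower semicontinuous $\lu$-supersolution $w$ with $w\geq v_\infty^{*}$ and $w=f$ on $\partial\Omega$, the continuity of $v_n$ gives $w\geq v_\infty^{*}\geq v_\infty\geq v_n=v_n^{*}$, so $w$ is admissible in the characterization of $u_n$; hence $u_n\leq w$ and in the limit $u_\infty\leq w$, giving $u_\infty\leq \ool(\lu,v_\infty,f)$. The reverse inequality reduces to checking that $u_\infty$ itself is admissible, i.e.\ the pointwise bound $u_\infty\geq v_\infty^{*}$; this is where the continuity of each $u_n$ and the inequality $u_n\geq v_n$ enter, via a careful argument along sequences $y_k\to x$ realizing $v_\infty^{*}(x)$ combined with the fact that $u_n$ sandwiches $v_n$ from above at every $y_k$. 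An entirely analogous chain of inequalities yields $v_\infty=\oou(\ld,u_\infty,g)$.

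Finally, for the continuity statement, on the open set $\{u_\infty>v_\infty^{*}\}$ one has $\lu u_\infty=0$ in the viscosity sense, so continuity of $u_\infty$ there follows from the hypothesis on $\lu$. On the interior of the contact set $\{u_\infty=v_\infty\}$ the two limits agree, which forces $u_\infty^{*}=v_\infty^{*}$ on that neighborhood, and combining the supersolution inequality for $\lu$ satisfied by $u_\infty=u_{\infty *}$ with the subsolution inequality for $\ld$ satisfied by $v_\infty^{*}$, together with lower semicontinuity of both, pinches $u_\infty^{*}=u_\infty$ in this neighborhood, i.e.\ continuity of the common value. The main obstacle in the program is precisely this identification step together with the obstacle inequality $u_\infty\geq v_\infty^{*}$: without any energy at our disposal, everything must be extracted from the order-theoretic characterization of Lemma \ref{propv1} and a careful bookkeeping of semicontinuous envelopes, a point made especially delicate by the fact that two different operators coexist.
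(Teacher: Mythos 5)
Your skeleton (monotonicity via Lemma \ref{propv1} and Lemma \ref{comparison}, boundedness via comparison with the $\ld$-solution with datum $g$, pointwise limits l.s.c.\ by Lemma \ref{remark3}, stability via Propositions \ref{propv2} and \ref{propv3}) matches the paper's First, Second and beginning of Third step. The gap is in the identification of the limits, and it is a real one, not a routine omission.

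First, to make $u_\infty$ admissible in the min-characterization of $\ool(\lu,\vi,f)$ you need $u_\infty\geq v_\infty^{*}$, and your sketched argument along sequences $y_k\to x$ with $v_\infty(y_k)\to v_\infty^{*}(x)$ cannot work: lower semicontinuity of $u_\infty$ gives $u_\infty(x)\leq\liminf_k u_\infty(y_k)$, the \emph{wrong} direction. From $u_n\geq v_n$, monotonicity and continuity of the $u_n$, one only gets $u_\infty\geq v_\infty$ pointwise, which does not imply $u_\infty\geq v_\infty^{*}$ (both limits are l.s.c.\ and the envelope can jump up at a point where both drop). The paper closes this by an auxiliary function: set $v=\oou(\ld,u_\infty,g)$; since $u_{n-1}\leq u_\infty$, Lemma \ref{comparison} gives $v_n\leq v$, hence $\vi\leq v$, and since $v$ is u.s.c.\ one passes to envelopes, $\vi^{*}\leq v^{*}=v\leq u_\infty$. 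Without this device the admissibility of $u_\infty$ is not established.

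Second, the phrase ``an entirely analogous chain of inequalities yields $v_\infty=\oou(\ld,u_\infty,g)$'' hides an asymmetry that breaks the analogy. In the $u$-direction, any $w$ admissible for the limit obstacle $v_\infty^{*}$ satisfies $w\geq v_\infty^{*}\geq v_n$, so it is admissible for each $u_n$ and the min-characterization transfers. In the $v$-direction the obstacles $u_{n-1}$ are \emph{increasing} to $u_\infty$: a $w$ admissible for $\oou(\ld,u_\infty,g)$ satisfies only $w\leq u_\infty$, which does not give $w\leq u_{n-1}$, so it is not admissible for $v_n$ and the max-characterization does not transfer. One gets $v_n\leq\oou(\ld,u_\infty,g)$ (hence $\vi^{*}\leq v$), but the reverse inequality $v\leq\vi^{*}$ is exactly what has to be proved. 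The paper instead verifies directly that $\vi^{*}$ satisfies all the conditions in Definition \ref{def3.4} and invokes uniqueness (Lemma \ref{unique}); the only nontrivial condition, $\ld\vi=0$ on $\{\vi^{*}<u_\infty\}$, is obtained by a quantitative ball argument: given $x_0$ with $\delta=u_\infty(x_0)-\vi^{*}(x_0)>0$, one produces $\eta>0$ and $n_0$ with $B_\eta(x_0)\subset\{u_n>v_n\}$ for $n\geq n_0$, so $\ld v_n=0$ there, and then passes to the limit using Propositions \ref{propv2}--\ref{propv3}. This step has no counterpart in your proposal and is not a formality; without it the identification of $v_\infty$ is incomplete.
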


\begin{proof} As before, we divide the proof in several steps.

	\textbf{First step:} Let us start proving that $\{u_{n}\}_{n=0}^{\infty}$ and $\{v_{n}\}_{n=0}^{\infty}$ are non-decreasing.
	
	Let us see that $v_0\leq v_1$. Recall that $v_1=\oou(\ld,u_0,g)$ and $u_0=\ool(\lu,v_0,f)$, then $v_1\leq u_0$ and $v_0\leq u_0$.
	
	In the set $\{v_1=u_0\}$ we have $v_0\leq u_0=v_1$.
	
	In the open set $\{v_1<u_0\}$ we have that $\li_2 v_1= 0$ and $\li_2 v_0 \leq 0$ and $v_1\geq v_0$ in $\partial\{v_1>u_0\}$. Then, by the Comparison Principle we obtain that $v_1\geq v_0$ in $\{v_1>u_0\}$.
	
	Thus, $v_0\leq v_1$ in $\Omega$.
	
	Now, using Lemma \ref{comparison} we have $u_0\leq u_1$. By induction we can continue and obtain $u_n\leq u_{n+1}$ and $v_n\leq v_{n+1}$ for all $n\geq 1$.
	
	\textbf{Second step:} Let us prove that $\{u_{n}\}_{n=0}^{\infty}$ and $\{v_{n}\}_{n=0}^{\infty}$ are bounded.
	
	Let $w\in C(\overline{\Omega})$ be a $ \li_2$-solution with $w=g$ in $\partial\Omega$. Then, since $\li_2 w= 0$, we get $\li_2 v_n \leq 0$. Moreover, we have $w\geq v_n$ in $\partial\Omega$. Hence, applying the Comparison Principle, we get $w\geq v_{n}$ in $\bar{\Omega}$ for all $n\geq 0$. This implies, together with the fact that $v_{n}\geq v_{0}$, that $\{v_n\}$ is bounded.
	
	 Now, if we consider $z=\ool(\lu,w,f )$, using that $v_n\leq w$ by Lemma \ref{comparison} $u_n\leq z$ and $z\in C(\overline{\Omega})$. This implies that $\{u_n\}$ is bounded. 
	
	\textbf{Third step:} Let us define 
	\begin{equation*}
		\lim_{n\rightarrow\infty}u_n(x)=\ui(x) \quad \quad \mbox{and} \quad \quad \lim_{n\rightarrow\infty}v_n(x)=\vi(x)
	\end{equation*}
	for all $x\in\overline{\Omega}$. Using that $\{u_n\}$ are continuous and the sequence is increasing we obtain that $\ui$ is lower semincontinuous. We also have $\ui=f$ and $\vi=g$ in $\partial\Omega$. Let us consider
	\begin{equation*}
		v=\oou(\ld,\ui,g).
	\end{equation*} 
	This function $v$ is well defined and upper semicontinuous. Our goal is to prove that $v=\vi$.
	
	Using that $u_n\leq \ui$, we get $v_n\leq v$ for all $n\geq 0$ by Lemma  \ref{comparison}. Then, we have that $\vi\leq v$ and $\vi^*\leq v^*=v$. This implies that $\vi^* \leq \ui$. On the other hand, by Proposition \ref{propv2}, we have that $\li_2 \vi^*\leq 0$ in $\Omega$. We also have $\vi^*\geq g$ in $\partial\Omega$. Now, we consider $w\in C(\Omega)$ a solution to
	\begin{equation*}
		\begin{cases}
			\li_2 w= 0&\;\;\textrm{in}\;\;\Omega\;\;\textrm{in the viscosity sense,}\\
			w=g&\;\;\textrm{in}\;\;\partial\Omega.
		\end{cases}
	\end{equation*}
	By the Comparison Principle we have that $v_n\leq w$ in $\overline{\Omega}$. Then $\vi\leq w$ in $\overline{\Omega}$ and $\vi^*\leq w^*=w$ in $\overline{\Omega}$. This implies $v^{*}_{\infty}\leq g$ on $\partial\Omega$ and, therefore, $v^{*}_{\infty}= g$ on $\partial\Omega$. Finally, we will prove that $\li_2\vi^*=0$ in the (open) set $\{\vi^*<\ui\}$. 
	Let us consider $x_0\in\{\vi^*<\ui\}$, and call $\delta=\ui(x_0)-\vi^*(x_0)$. Let us consider $n_0\geq 0$ such that $\ui(x_0)-u_{n_0}(x_0)<\frac{\delta}{4}$. Using that $u_{n_0}$ is continuous, there exists $\eta_1=\eta_1(n_0)>0$ such that $|u_{n_0}(x_0)-u_{n_0}(y)|<\frac{\delta}{4}$ for all $y\in B_{\eta_1}(x_0)$. On the other hand, there exists $\eta_2>0$ such that $\vi^*(y)<\vi^*(x_0)+\frac{\delta}{4}$ for all $y\in B_{\eta_2}(x_0)$. 
	Gathering the previous estimates we obtain
	\begin{equation*}
	\begin{array}{l}
	\displaystyle
		u_{n_0}(y)-\vi^*(y)=\underbrace{u_{n_0}(y)-u_{n_0}(x_0)}_{>-\frac{\delta}{4}}+\underbrace{u_{n_0}(x_0)-\ui(x_0)}_{>-\frac{\delta}{4}} \\[10pt]
		\qquad \qquad		\qquad \qquad \displaystyle+\underbrace{\ui(x_0)-\vi^*(x_0)}_{=\delta}+\underbrace{\vi^*(x_0)-\vi^*(y)}_{>-\frac{\delta}{4}}>\frac{\delta}{4}
		\end{array}
	\end{equation*}
	for all $y\in B_{\eta}(x_0)$ with $\eta=\min\{\eta_1,\eta_2\}$. Now, we have that $u_n(y)\geq u_{n_0}(y)$ and $v_n(y)\leq \vi^*(y)$ for all $n\geq n_0$. Thus
	\begin{equation*}
		u_n(y)-v_n(y)\geq u_{n_0}(y)-\vi^*(y)>\frac{\delta}{4}>0
	\end{equation*}
	for all $n\geq n_0$ and for all $y\in B_{\eta}(x_0)$. Then $B_{\eta}(x_0)\subseteq \{u_n>v_n\}$ for all $n\geq n_0$. 
	Using that $\li_2 v_n =0$ in $B_{\eta}(x_0)$ and taking the limit we obtain $\li_2 \vi=0$ in $B_{\eta}(x_0)$ which is  the same that $\li_2 \vi^*=0$ in $B_{\eta}(x_0)$ because $\vi$ is lower semicontinuous.
As a consequence, $\li_2 v_{n}\geq0$ in $B_{\eta}(x_{0})$. In particular, $\li_2 v_{n}\geq0$ in $B_{\eta}(x_{0})$ for all $n\geq n_{0}$. Then, by Proposition \ref{propv3}, $\li_{2} v_{\infty}\geq 0$ in $B_{\eta}(x_{0})$ in the viscosity sense. Since previously we just proved that $v_{\infty}$ is a $\li_{2}$-viscosity subsolution in $\Omega$, we obtain that $v_{\infty}$ is a $\li_{2}$-viscosity solution in $B_{\eta}(x_{0})$. Therefore, $v_{\infty}$ is a $\li_{2}$-viscosity solution in the set $\{v_{\infty}^{*}<u_{\infty}\}$.

Putting all together, since $\ui$ is lower semicontinuous, we obtain
	\begin{equation*}
		\begin{cases}
			\vi^*\leq \ui&\;\;\textrm{in}\;\;\Omega\\
				\vi^*=g&\;\;\textrm{in}\;\;\partial\Omega,\\
			\li_2 \vi \leq 0&\;\;\textrm{in}\;\;\Omega\;\;\textrm{in the viscosity sense,}\\
			\li_2 \vi = 0&\;\;\textrm{in}\;\;\{\vi^*<\ui \}\;\;\textrm{in the viscosity sense.}\\
		\end{cases}
	\end{equation*}
By uniqueness of the obstacle problem we get $\vi=v=\oou(\ld,\ui,g)$.

	Now, let us define $u=\ool(\lu,\vi,f)$. Using $v_n\leq \vi^*$, we have $u_{n-1}\leq u$. Then, taking the limit,  $\ui\leq u$. On the other hand we have that $\lu \ui\geq 0$ in $\Omega$ in viscosity sense due to Proposition \ref{propv3} and $\ui=f$ in $\partial\Omega$. Moreover, since $\ui$ is lower semicontinuous, taking $\limsup$ in the inequality $u_{n}\geq v_{n}$, we obtain $\ui\geq \vi^*$ in $\overline{\Omega}$.
	 Then, using the Lemma \ref{propv1}, we get $u\leq \ui$. Thus, we conclude that $\ui=u$.
	 
	Finally, we have that $u_{\infty}$ and $v_{\infty}$ are lower semicontinuous functions in $\bar{\Omega}$ by Lemma \ref{remark3}. Moreover, $\ui$ is continuous in the interior of $\{\ui=\vi^*\}$ because $\ui$ is a lower semicontinuous and, by definition, $\vi^*$ is an upper semicontinous function. This also implies that $\vi^{*}$ is a continuous function in the interior of  $\{\ui=\vi^*\}$. Then, $\vi=\vi^{*}$ there and therefore $\vi$ is a continuous function in the interior of the set $\{\ui=\vi\}$.
\end{proof}

\begin{remark}
	If $\ui$ or $\vi$ are continuous at $\partial\{\ui=\vi^{*}\}$, then, both functions $\ui$ and $\vi$ are continuous in $\bar{\Omega}$ and therefore the sequences defined in the previous theorem converge uniformly in the whole $\overline{\Omega}$.

In fact, without loss of generality, suppose that $\ui$ is continuous in $\partial\{\ui=\vi^*\}$. We will prove the above using the Comparison Principle for $\mathcal{L}_{1}$. Since $\mathcal{L}_{1}\ui=0$ in $\{\ui>\vi^{*}\}$ in the viscosity sense, we have in particular that $\ui^{*}$ is a $\mathcal{L}_{1}$-subsolution and $u_{\infty*}$ a $\mathcal{L}_{1}$-supersolution in $\{\ui>\vi^{*}\}$ in the viscosity sense. Moreover, since $f>g$ in $\partial\Omega$, the boundary $\partial\{\ui>\vi^{*}\}$ is the disjoint union of $\partial\Omega$ and $\partial\{\ui=\vi^{*}\}$. In $\partial\Omega$, we have $\ui^{*}=u_{\infty*}$ by construction of $u_{n}$. And in $\partial\{\ui=\vi^{*}\}$ we have also that $\ui^{*}=u_{\infty*}$ because we have supposed $\ui$  continuous across that bounday. Then, $\ui^{*}=u_{\infty*}$ in $\partial\{\ui>\vi^{*}\}$. Therefore, by the Comparison Principle of $\mathcal{L}_{1}$, $\ui^{*}\leq u_{\infty*}$ in $\overline{\{\ui>\vi^{*}\}}$ and therefore $\ui$ is continuous in $\overline{\{\ui>\vi^{*}\}}$. Moreover, as we  have seen in the proof of the above theorem,  $\ui$ is also continuous in $\{\ui>\vi^{*}\}$. Then $\ui$ is continuous in $\overline{\Omega}.$ As a consequence, using the latest hypothesis concerning the operator $\mathcal{L}_{2}$, $\vi$ is continuous in the whole $\overline{\Omega}$, because $\vi$ is the solution of the upper obstacle problem with continuous boundary datum and continuous obstacle. 

Finally, since the continuous functions $\ui$ and $\vi$ are the limit of the sequences of continuous functions $\{u_{n}\}$ and $\{v_{n}\}$ in a compact set $\bar{\Omega}$, the convergence is uniform.
\end{remark}

\begin{remark}  \label{rem77} As happens in the variational setting,
we also have here that the limit depends strongly on the initial function
from where we start the iterations. Moreover, the convergence of $(u_{n},v_{n})$ to a solution of the two membranes problem relies on the monotonicity of the sequences $\{u_{n}\}$ and $\{v_{n}\}$. This property comes from the fact that the initial function $v_{0}$ is a $\mathcal{L}_{2}$-viscosity subsolution. For this discussion in the variational context, we refer to the arguments given in Remark \ref{rem4}.
\end{remark}

	{\bf Acknowledgments}
	
	I. Gonzálvez was supported by the European Union's Horizon 2020 research and innovation programme under the Marie Sklodowska-Curie grant agreement No.\,777822, and by grants CEX2019-000904-S, PID2019-110712GB-I00, PID2020-116949GB-I00, and RED2022-134784-T, all four funded by MCIN/AEI/10.13039/ 501100011033 (Spain).
	
	A. Miranda and J. D. Rossi were partially supported by 
			CONICET PIP GI No 11220150100036CO
(Argentina), PICT-2018-03183 (Argentina) and UBACyT 20020160100155BA (Argentina).


\end{document}